\def\R{\ensuremath\mathbb{R}}
\def\C{\ensuremath\mathbb{C}}
\def\bI{\ensuremath\mathbb{I}}
\def\bJ{\ensuremath\mathbb{J}}
\def\bM{\ensuremath\mathbb{M}}
\def\bL{\ensuremath\mathbb{L}}
\def\N{\ensuremath\mathbb{N}}
\def\Z{\ensuremath\mathbb{Z}}
\def\F{\ensuremath\mathbb{F}}
\def\a{\ensuremath\alpha}
\def\G{\ensuremath\Gamma}
\def\to{\ensuremath\rightarrow}
\def\<{\ensuremath\langle}
\def\>{\ensuremath\rangle}
\DeclareMathAlphabet{\mathcalligra}{T1}{calligra}{m}{n}
\DeclareMathOperator{\SL}{SL}
\DeclareMathOperator{\Hom}{Hom}
\DeclareMathOperator{\ad}{ad}
\DeclareMathOperator{\voll}{Vol}
\DeclareMathOperator{\cl}{cl}
\newtheorem{thm}{Theorem}[section]
\newtheorem{thmM}{Main Theorem}
\newtheorem{lemma}[thm]{Lemma}
\newtheorem{prop}[thm]{Proposition}
\newtheorem{cor}[thm]{Corollary}
\newtheorem{con}[thm]{Conjecture}
\newtheorem{que}[thm]{Question}
\theoremstyle{definition}
\newtheorem{defn}[thm]{Definition}
\newcommand{\vc}{\ensuremath{\mathbf{c}}\xspace}
\newcommand{\ve}{\ensuremath{\mathbf{e}}\xspace}
\newcommand{\vf}{\ensuremath{\mathbf{f}}\xspace}
\newcommand{\vs}{\ensuremath{\mathbf{s}}\xspace}
\newcommand{\vt}{\ensuremath{\mathbf{t}}\xspace}
\newcommand{\vx}{\ensuremath{\mathbf{x}}\xspace}
\newcommand{\vy}{\ensuremath{\mathbf{y}}\xspace}
\newcommand{\sD}{\mathcal{D}}
\newcommand{\sF}{\mathcal{F}}
\newcommand{\sU}{\mathcal{U}}
\newcommand{\sX}{\mathcal{X}}
\newcommand{\sY}{\mathcal{Y}}
\newcommand{\sZ}{\mathcal{Z}}
\newcommand{\mg}{\mathfrak{g}}
\newcommand{\mh}{\mathfrak{h}}
\begin{document}
\title[Lattices in Kac-Moody Groups]{Cocompact Lattices in Locally Pro-$p$-complete Rank 2 Kac-Moody Groups}
\author{Inna Capdeboscq}
\email{I.Capdeboscq@warwick.ac.uk}
\address{Department of Mathematics, University of Warwick, Coventry, CV4 7AL, UK}
\author{Katerina Hristova}
\email{K.Hristova@uea.ac.uk}
\address{School of Mathematics, University of East Anglia, Norwich, NR4 7TJ, UK}
\author{Dmitriy Rumynin}
\email{D.Rumynin@warwick.ac.uk}
\address{Department of Mathematics, University of Warwick, Coventry, CV4 7AL, UK
  \newline
\hspace*{0.31cm}  Associated member of Laboratory of Algebraic Geometry, National
Research University Higher School of Economics, Russia}
\thanks{The third author was partially supported by the Russian Academic Excellence Project `5--100' and by the Max Planck Society.}

\date{February 25, 2019}
\subjclass{Primary 20G44; Secondary 22E40}
\keywords{Kac-Moody group, cocompact lattice, building, completion}

\begin{abstract}
  We initiate an investigation of lattices in a new class of locally compact groups, so called locally pro-$p$-complete Kac-Moody groups. We discover that in rank 2 their cocompact lattices are particularly well-behaved:
under mild assumptions, a cocompact lattice
in this completion contains no elements of order $p$. 
This statement is still an open question for 
the Caprace-R\'emy-Ronan completion.
Using this, modulo results of Capdeboscq and Thomas,
we classify edge-transitive cocompact lattices
and describe a cocompact lattice of minimal covolume.
\end{abstract}

\maketitle

Given a locally compact group $H$, it is an intriguing question to find a lattice
attaining minimal covolume in $H$, if, of course, it exists. 
The earliest important result along these lines is
a classical theorem of Siegel that the minimal covolume lattice in $\SL_2(\R)$ is
the triangle $(2,3,7)$-group. In particular, it is cocompact.

Over a non-Archimedean local field the nature of the minimal covolume lattice changes drastically.
As shown by Lubotzky \cite{Lu}, the lattice of minimal
covolume in 
 $\SL_2 (\F_q ((t))\, )$ is $\SL_2(\F_q [t^{-1}])$.
It is no longer cocompact. 

Another interesting question is to find
a cocompact lattice of minimal covolume, i.e.,
minimal on the set of cocompact lattices.
In the same paper \cite{Lu} Lubotzky
answers this question as well for 
$\SL_2 (\F_q ((t))\, )$. Notice that for locally compact groups over a non-Archimedean local field
it is rare for the cocompact lattices to exist.
As proven by Borel and Harder \cite{BH},
the only  simple Chevalley groups
$G(\F_q ((t))\, )$ that admit cocompact lattices
are those of type $A_{n-1}$, i.e., the groups 
$\SL_n (\F_q ((t))\, )$.

The group $\SL_2 (\F_q ((t))\, )$ is a basic example
of a locally compact Kac-Moody group of rank $2$ over the finite field $\F_q$ of $q=p^n$ elements.
Let $G=G(\F_q)$ be a minimal Kac-Moody group of rank $2$ (with a minor restriction on the Cartan matrix).
Capdeboscq and Thomas  \cite{CaTh} construct and study lattices in the topological Kac-Moody group $\overline{G}$, the Caprace-R\'emy-Ronan completion of $G$.
They find the minimal covolume lattices in $\overline{G}$ among
the class of cocompact lattices without elements of order $p$ \cite{CaTh}.
This  order $p$ restriction is motivated by the fact that the cocompact lattices in  $\SL_2 (\F_q ((t))\, )$ do not contain any elements of order $p$. It feels reasonable that the same phenomenon should hold in a general topological Kac-Moody rank-$2$ group.
Capdeboscq and Thomas even conjecture that cocompact lattices in such groups contain no elements of order $p$. 


However, there are other topological Kac-Moody groups, for instance, the Ma\-thieu-Ro\-us\-seau group  $G^{ma+}$, the Carbone-Garland group $G^{c\lambda}$ and the locally pro-$p$-complete group $\widehat{G}$, introduced by Capdeboscq and Rumynin \cite{CaRu}. Denoting the closure of $G$ in  $G^{ma+}$ by $\overline{G}^+$ (note that $G^{ma+}=\overline{G}^+$ in many but not all cases), we have surjective group homomorphisms
$$\widehat{G}\twoheadrightarrow \overline{G}^+\twoheadrightarrow G^{c\lambda}\twoheadrightarrow \overline{G}.$$
Sometimes these groups coincide, for instance, in the untwisted affine case of rank at least 3 \cite[Prop. 3.2]{CaRu}. 
On other occasions, the groups are distinct, for example, $\widehat{G} \neq \overline{G}$ in rank 2.
The precise relations among these groups are highly intriguing
(see \cite{kn::CR,CaRu,kn::E,Mar2,kn::Rou} for several discussions on this topic).
It is fair to conclude that the study of all the completions is interesting:  this yields new information about all of them as well as the original group $G$.


In the present paper we study cocompact lattices in the local pro-$p$-completion $\widehat{G}$ in rank $2$.
We prove that a cocompact lattice in $\widehat{G}$ contains no elements of order $p$. 
We establish pushing and pulling procedures connecting
cocompact lattices in $\widehat{G}$ with
cocompact lattices in $\overline{G}$ without elements of order $p$. 
As an upshot, we describe the minimal covolume cocompact lattices in $\widehat{G}$.
This demonstrates that the lattices in the local pro-$p$-completion $\widehat{G}$ is an interesting subject, deserving further attention of mathematicians. It would be equally important to investigate lattices in the other complete Kac-Moody groups.

Let us describe the content of this paper section-by-section.
In Section~\ref{one} we set up the scene defining
the key Kac-Moody groups and discussing their structure.

In Section~\ref{two} we study the elements of order $p$ 
and the cocompact lattices in $\widehat{G}$. We prove one of
the main results
of this paper
that enable our investigation of covolumes later on:
\begin{thmM} (fusion of Theorems~\ref{P2_hatG} and \ref{P1_hatG})
  Let $A$ be a $2\times 2$ generalised Cartan matrix with
  all $|a_{ij}|\geq 2$.
  Let $\sD$ be a   root datum of type $A$.
The following statements hold for 
  the corresponding (to $\sD$) locally pro-$p$-complete
  Kac-Moody group $\widehat{G}$ 
  over the field of $q=p^a$ elements:
\begin{enumerate}
\item
  A cocompact lattice $\G$ of $\widehat{G}$ contains no elements of order $p$. 
\item   Any element of order $p$ in $\widehat{G}$ is contained in a conjugate
  of the subgroup ${\sU}$ of $\widehat{G}$.
\end{enumerate}
\end{thmM}

In Section~\ref{three}
we define  the push-forward and the pull-back of cocompact lattices.
If $\G\leq\widehat{G}$ is a cocompact lattice, 
then its push-forward 
$\pi(\G)\leq\overline{G}$ is a cocompact lattice.
If $\G\leq G\leq \overline{G}$ is a cocompact lattice, 
then its pull-back
$\G\leq G\leq \widehat{G}$ is a cocompact lattice.
Notice that both push-forward and pull-back preserve
the isomorphism class of a cocompact lattice.

In Section~\ref{four}
we compare covolumes of cocompact lattices in $\widehat{G}$ and $\overline{G}$.
After a suitable normalisation the covolumes do not change
and could be computed on the set $\sX_0$ of
vertices of the Tits building of $G$
(see Proposition~\ref{Haar_comparison}):
$$
  \widehat{\mu} ({\G\setminus\widehat{G}}) =
    \overline{\mu} ({\G\setminus\overline{G}}) = 
    \sum_{[\vx] \in \G\setminus\sX_0} \frac{1}{|\G_\vx|} \; .
$$
    
In Section~\ref{five} we utilise the results of Capdeboscq
and Thomas \cite{CaTh} about cocompact lattices in $\overline{G}$.
We prove the second main result of this paper:
  \begin{thmM}
\label{main_th}
  Let $A$ be a symmetric $2\times 2$ generalised Cartan matrix with
  all $|a_{ij}|\geq 2$.
  Let $\sD$ be a simply-connected root datum of type $A$.
The following statements hold for 
  the corresponding (to $\sD$) locally pro-$p$-complete
  Kac-Moody group $\widehat{G}$ 
  over the field of $q=p^a$ elements:
\begin{enumerate}
\item
  $\widehat{G}$ 
  admits a cocompact lattice.
\item
  If $q \geq 514$, then  
  there exist
  $\delta \in \{ 1,2,4\}$ such that
  $$ \min \{ \widehat{\mu}( \G \setminus \widehat{G})
  \,\mid\, \G \mbox{ is a cocompact lattice}\}= \frac{2}{(q+1) |Z({G})| \delta} \; .$$
\end{enumerate}
\end{thmM}
  
In Section~\ref{six} we discuss cocompact lattices in $\widehat{G}$
for more general Cartan matrices.
We formulate several questions and conjectures facilitating
further research of locally pro-$p$-complete groups $\widehat{G}$ and their cocompact lattices.

\section{Two Completions of Kac-Moody Groups}
\label{one}

Let $\F=\F_q$ be a finite field, where $q=p^a$, for some prime $p$. 
Let 
$$ A= 
\begin{pmatrix}
a_{11} & a_{12} \\
a_{21} & a_{22} \\
\end{pmatrix}
=
\begin{pmatrix}
 2 &  a_{12} \\
 a_{21} & 2
\end{pmatrix}$$
be a $2\times 2$ generalised Cartan matrix
with $\max (a_{21},a_{12}) \leq -2$. 

Recall that 
{\em a root datum of type $A$} is a quintuple
$\sD = (A, \sZ, \sY, \Pi, \Pi^\vee)$ where 
\begin{itemize}
\item $\sY$ is a finitely generated free abelian group,
\item $\sZ=\Hom_{\Z}(\sY, \Z)$ is its dual group, 
\item $\Pi=\{ \alpha_1, \alpha_2 \} \subset \sZ$ is the (ordered) set of simple roots,
\item $\Pi^\vee=\{ \a_1^\vee, \alpha_2^\vee \} \subset\sY$ is the (ordered) set of simple coroots,
\end{itemize}
satisfying a single axiom $\alpha_i(\alpha_j^\vee)=a_{ji}$ for all $i,j \in \{1,2\}$.

The Weyl group of $A$ is the infinite dihedral group
$$W= \langle w_1, w_2 \ |\ w_1^2= w_2^2 \rangle \; .$$
We use the standard notation $[w_iw_j]_m \coloneqq w_i w_jw_i \ldots$
($m$ symbols $w$ in the right hand side) for elements of $W$. 
%
The set of real roots is
$$\Phi=\{ w \alpha_1, w \alpha_2 \ |\ w \in W\}.$$ 
Denote by $\Phi_+$ the set of positive real roots and $\Phi_{-}$ the set of negative real roots.
The set $\Phi_+$ can be written as a disjoint union of the two sets $\Phi_+^1$ and $\Phi_+^2$:
$$\Phi_+^1 \coloneqq \{ \alpha_1, w_1\alpha_2, w_1 w_2\alpha_1, w_1 w_2w_1\alpha_2,\ldots ,(w_1w_2)^n\alpha_1,(w_1w_2)^n w_1\alpha_2,\ldots  \}, $$
$$\Phi_+^2 \coloneqq \{ \alpha_2, w_2\alpha_1, w_2 w_1\alpha_2, w_2 w_1w_2\alpha_1,\ldots ,(w_2 w_1)^n\alpha_2,(w_2 w_1)^n w_2\alpha_1,\ldots  \}.$$
We also need the sets $-\Phi_+^1\coloneqq \{ -\alpha \,\mid\, \alpha \in \Phi_+^1\}$ and $-\Phi_+^2$.

Let $G= G_{\sD} (\F)$ be the Kac-Moody group over the field $\F$
associated to a root datum $\sD$ of type $A$
(cf. \cite{CaCh,Ti,CaKRu}).
To define it, we introduce an additive group  
$$
U_\alpha \coloneqq  \{ x_\alpha(\vt) \, \mid \, \vt \in \F\} \, , 
\ \ \ 
x_\alpha(\vt)x_\alpha(\vs)=x_\alpha(\vt+\vs) \, ,
\ \ \ 
U_\alpha \cong (\F,+)
$$
for each real root $\alpha$
and the torus
$$H = \sY \otimes_{\Z} \F^{\times} \, .$$
Let $F$ be the free product of $H$ and all $U_\alpha$, $\alpha\in \Phi$.
We require the following elements of $F$ defined for $\alpha \in \Phi$,
$\vt\in \F^\times$, $i \in \{1,2\}$:
\begin{itemize}
\item $n_\alpha (\vt) \coloneqq x_{\alpha}(\vt) x_{-\alpha}(\vt^{-1}) x_{\alpha}(\vt)$, \ \
  $n_i(\vt) \coloneqq n_{\alpha_i}(\vt)$, 
\item $h_\alpha (\vt)\coloneqq n_{\alpha}(\vt)n_{\alpha}(1)^{-1}$, \ \ 
  $h_i(\vt)\coloneqq n_{\alpha_i}(\vt)$.
\end{itemize}
The Kac-Moody group $G= G_{\sD} (\F)$ of type $A$ is
a quotient group of $F$
by the following relations:
\begin{enumerate}
\item $h_i(\vt)=\alpha^{\vee}_i \otimes \vt$,
\item $(y\otimes \vt) x_{\alpha_i}(\vs)(y\otimes \vt)^{-1}=x_i(\vt^{\alpha_i(y)}\vs)$, 
\item $n_i(1) (y \otimes \vt) n_i^{-1}(1)=w_i(y) \otimes \vt$, 
\item $n_i(1) x_{\alpha}(\vt) n_i(1)^{-1}=x_{w_i(\alpha)}(\epsilon_{i, \alpha}\vt)$
  for uniquely determined
  $\epsilon_{i, \alpha} \in \{-1,1\}$,
\item
  $x_\alpha(\vt)x_\beta(\vs)= x_{\beta}(\vs)x_{\alpha}  (\vt)$, 
  if $\{ \alpha, \beta\} \subset \Phi_+^1 \cup -\Phi_+^2$ or
  $\{\alpha, \beta\} \subset \Phi_+^2 \cup -\Phi_+^1$.
\end{enumerate}
The choice of $\epsilon_{i, \alpha}$
in the relation~(4)
depends on the events in the corresponding Kac-Moody
algebra $\mg$ over the complex numbers (cf. \cite{CaCh}).
Let us elaborate. 
The Lie algebra $\mg$ is generated by
$$\ve_1, \ve_2, \vf_1, \vf_2
\ \mbox{ and } \
\mh = \sY \otimes_{\Z} \C \, .$$
Using the adjoint representation $\ad (\vx) (\vy) = [\vx,\vy ]$, 
we define the following operators on $\mg$: 
$$
\eta_i \coloneqq \exp (\ad (\ve_i))\exp (\ad (\vf_i))\exp (\ad (\ve_i))
\, , \ \ 
[\eta_i \eta_j]_m \coloneqq \eta_i \eta_j \eta_i \ldots
\mbox{ (}
m
\mbox{ symbols }
\eta
\mbox{).}
$$
It is easy to observe that $\eta_i^2 (\ve_i) = \ve_i$,
$\eta_i^2 (\vf_i) = \vf_i$
and $\eta_i^2 (\ve_j) = \pm \ve_j$, 
$\eta_i^2 (\vf_j) = \pm \vf_j$
with the same sign for $i\neq j$. Let us define
the signs $\epsilon_{i} \in \{-1,1\}$ by
$$
\eta_1^2 (\ve_2) = \epsilon_1 \ve_2, \ \ \
\eta_2^2 (\ve_1) = \epsilon_2 \ve_1\, .
$$
A real root $\alpha$ can be written as
$\alpha = w(\alpha_j)$ for unique $j$ and $w=[w_sw_t]_m\in W$.
We define the signs by
$$
\epsilon_{i,\alpha} \coloneqq 
\begin{cases}
 1,\ \text{if}\ i \neq s \, ,\\
 \epsilon_i^{k_t},\ \text{if}\ i = s, \ w_i (\alpha) = k_1 \alpha_1 + k_2 \alpha_2 \, .
\end{cases}
$$
Notice that this agrees with the definition
in Carter and Chen \cite{CaCh}
because we can define the 
corresponding root element in $\mg$
by $\ve_\alpha \coloneqq [\eta_s \eta_t]_m (\ve_j)$
so that
$$
\eta_i (\ve_\alpha) =
\eta_i [\eta_s \eta_t]_m (\ve_j) =
\begin{cases}
 [\eta_t \eta_s]_{m+1} (\ve_j) = \ve_{w_i (\alpha)},\ \text{if}\ i \neq s \, , \\
 \eta_i^2 [\eta_t \eta_s]_{m-1} (\ve_j) =
 \eta_i^2 (\ve_{w_i (\alpha)}) =
 \epsilon_i^{k_t} \ve_{w_i (\alpha)}, \ \text{if}\ i = s \, .
\end{cases}
$$

Notice that the relation~(5) is simpler in our case
(rank 2 and $\max (a_{21},a_{12}) \leq -2$)
than in the general case \cite{CaCh} 
for two reasons.
First, the conditions in the relation~(5) describe precisely
all prenilpotent pairs of roots $\alpha,\; \beta$.
Second, $\alpha+\beta$ is no longer a root
so that all our commutator relations are trivial.

Let $\F=\F_q$. Note that $U_\a \cong (\F_q,+)$. 
We also have subgroups
$$U_{-}=\langle U_\a \ |\ \a \in \Phi_{-}\rangle
\ \text{ and }\
U:=U_+=\langle U_\a \ |\ \a \in \Phi_+ \rangle .$$ 
The group $G$ admits a $(B,N)$-pair structure, with
$$
B=U \rtimes H, \ \ 
H = \sY \otimes_{\Z} \F^{\times} \ \mbox{and} \ 
N \geq H\ \mbox{with}\ N/H\cong W \ \ 
$$
where $H=B \cap N$.
Then the standard maximal parabolic subgroups are
$$P_i \coloneqq B \amalg B \dot{w_i} B\ \mbox{for}\ i=1,2.$$

The $(B,N)$-pair structure yields an associated Tits building $\sX$ of $G$.
In our case (a Kac-Moody group of rank 2 over a finite field $\F_q$)
the building $\sX$ is a $(q+1)$-regular tree.
Let $\sX_0$ and $\sX_1$ denote respectively the set of vertices and the set of edges of $\sX$. The elements of $\sX_0$ correspond to the conjugates of the parabolics $P_i$, $i=1,2$, and elements of $\sX_1$ to the conjugates of $B$.
In particular, we have two types of vertices - corresponding respectively to $P_1$ and $P_2$. The group $G$ acts naturally on $\sX$ with fundamental domain an edge. The action on $\sX$ yields an action topology,  called
{\em the building topology} on $G$ and
the corresponding completion $\widecheck{G}$.
The kernel of the natural map $G\rightarrow\widecheck{G}$
is non-trivial: it is equal to the centre of $G$.

Caprace and R\'{e}my \cite{CaRe} define a local version of the same topology.
The completion $\overline{G}$ of $G$ with respect to this topology
retains the centre.
Let $\vc \in \sX_1$ be the edge fixed by $B$. For each $n \in \N$ define
$$U_{+,n} \coloneqq \{ g \in U \ |\ g \cdot \vc'=\vc' \ \text{for every edge}\ \vc'\ \text{such that}\ d(\vc,\vc') \leq n\},$$
where $d$ denotes the distance on $\sX$. Using this Caprace and R\'emy define the following left-invariant metric $d_+: G \times  G \to \R_+$ on $G$:
$$
d_+(g,h)=\\
\begin{cases}
 2,\ \text{if}\ g^{-1} h \notin U\ ,\\
 2^{-n},\ \text{if}\ g^{-1} h \in U\ \text{ and }\ n=\text{max}\{k \in \N\ |\ g^{-1}h \in U_{+,k}\}
\end{cases}$$
for all $g, h \in G$ \cite{CaRe}.
Write $\overline{G}$ for the completion of $G$ with respect to this metric.
It is worth noticing that $\overline{G}$ is a locally compact totally disconnected group with a $(B,N)$-pair
$(\overline{B}, N)$, where $\overline{B}$ denotes the closure of $B$ in $\overline{G}$ \cite{CaRe}.

Capdeboscq and Rumynin \cite{CaRu} introduce the local version
of the pro-$p$-topology on $G$. 
Let
$$ \sF \coloneqq \{ A \leq U\ |\ |U:A|=p^k,\ \text{for some}\ k \in \N \}.$$
The set $\sF$ is a fundamental system of neighbourhoods of $1$ in $U$ and, thus, gives a  topology on  $B$.
This also defines a topology on $G$ \cite[Th. 1.2]{CaRu}.
Capdeboscq and Rumynin show that the completion $\widehat{G}$ of $G$ with respect to this topology is a locally compact totally disconnected 
group with $(B,N)$-pair $(\widehat{B},N)$, where $\widehat{B}$ is the completion of $B$ \cite{CaRu}.   Moreover, $\widehat{B}$ is open in $\widehat{G}$ and $\widehat{B}= \widehat{U} \rtimes H$, with $\widehat{U}$ being the full pro-$p$ completion of $U$ \cite{CaRu}.
We will call the group $\widehat{G}$
\emph{the local pro-$p$ completion} of $G$.

\section{Behaviour of elements of order $p$ in $\widehat{G}$}\label{two}

Having introduced the groups $\widehat{G}$ and $\overline{G}$, 
we are ready to investigate their cocompact lattices.
To discuss these two groups in parallel,
we talk about a group $\widetilde{G}\in\{\widehat{G},\overline{G}\}$.  

Recall that \emph{a cocompact lattice} is a discrete subgroup $\G \leq \widetilde{G}$ such that the quotient topological space $\G \setminus \widetilde{G}$ is compact. The space $\G \setminus \widetilde{G}$ admits a finite $\widetilde{G}$-invariant measure (cf. \cite[Ch. 1]{BaLu}).

%
Using  $\Phi_+^1$ and $\Phi_+^2$ from Section~\ref{one}, let us define the following abelian $p$-groups:
$$
U_i \coloneqq \langle U_\alpha \ |\ \alpha \in \Phi_+^i \rangle \ 
\mbox{ and} \ 
-U_i \coloneqq \langle U_\alpha \ |\ -\alpha \in \Phi_+^i \rangle ,\ \text{for}\ i=1,2.$$
We may now consider
$$\sU =   \sU_1 \coloneqq \cl(U_1 \times  -U_2)\ \text{ and }\ \sU_2 \coloneqq \cl( -U_1 \times  U_2), $$
where $\cl( \quad)$ denotes the closure in the relevant topology on the complete Kac-Moody group.
We will be interested in the conjugates of $\sU$.
In particular, note that $\sU_2= w_1 \sU w_1^{-1}$.



\begin{defn} \label{properties}
  We say that a complete Kac-Moody group $\widetilde{G}$ is
  \emph{$p$-well-behaved}, if the following conditions hold:
\begin{itemize}
\item[(P1)] Cocompact lattices in $\widetilde{G}$ do not contain elements of order $p$. 
\item[(P2)] Any element of order $p$ in $\widetilde{G}$ is contained
  in a conjugate of the subgroup $\sU$.
\end{itemize} 
\end{defn}



The aim of this section is to show that the local pro-$p$ complete group $\widehat{G}$ is $p$-well-behaved. It is an open conjecture that
$\overline{G}$ is $p$-well-behaved \cite{CaTh}.


\begin{lemma} \label{free pr decomp}
  Let $U=\langle U_\a \ |\ \a \in \Phi_+ \rangle $, and  $U_1$ and $U_2$  are as above.
  Then $U$ is a free product of $U_1$ and $U_2$.
\end{lemma}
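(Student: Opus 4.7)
My plan is to construct a canonical surjection $\phi : U_1 \ast U_2 \twoheadrightarrow U$ and then to establish its injectivity via the Tits presentation of the positive unipotent subgroup. Since $\Phi_+ = \Phi_+^1 \sqcup \Phi_+^2$, the group $U$ is generated by $U_1 \cup U_2$, so the inclusions $U_i \hookrightarrow U$ combine under the universal property of the free product into a surjection $\phi : U_1 \ast U_2 \twoheadrightarrow U$. I would next verify that each $U_i$ is a well-understood abelian group: relation~(5) applied with two roots in $\Phi_+^i$ yields $[x_\alpha(t), x_\beta(s)] = 1$ for all $\alpha, \beta \in \Phi_+^i$, and combined with the additive structure of each $U_\alpha \cong (\F, +)$ this gives $U_i \cong \bigoplus_{\alpha \in \Phi_+^i} U_\alpha$. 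Consequently every non-identity element of $U_1 \ast U_2$ has a unique reduced form $u_{i_1} u_{i_2} \cdots u_{i_k}$ with $u_{i_j} \in U_{i_j} \setminus \{1\}$ and $i_j \neq i_{j+1}$.

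For the injectivity of $\phi$, the strategy is to appeal to the standard Tits presentation of $U = U_+$ inside a minimal Kac-Moody group: $U$ is presented on the generators $\{x_\alpha(t) : \alpha \in \Phi_+,\; t \in \F\}$ modulo the additive relations in each $U_\alpha$ together with the Steinberg commutator relations attached to each prenilpotent pair of positive roots. A pair $\{\alpha, \beta\} \subset \Phi_+$ is prenilpotent if and only if some $w \in W$ satisfies $w\alpha, w\beta \in \Phi_-$. A direct calculation inside the infinite dihedral Weyl group $W = \langle w_1, w_2 \mid w_i^2 = 1\rangle$ shows that the inversion set $N(w) = \Phi_+ \cap w^{-1}\Phi_-$ is contained entirely within $\Phi_+^1$ or entirely within $\Phi_+^2$, according as a reduced expression of $w$ begins with $w_1$ or $w_2$. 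Consequently no pair $\{\alpha, \beta\}$ with $\alpha \in \Phi_+^1$ and $\beta \in \Phi_+^2$ is prenilpotent, so relation~(5) imposes no commutator relation between $U_1$ and $U_2$. The presentation of $U$ therefore agrees precisely with that of $U_1 \ast U_2$, and $\phi$ is an isomorphism.

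The main obstacle is the legitimate use of the Tits presentation: one has to be sure that the defining relations (1)--(4) of $G$, which involve the torus $H$ and the Weyl lifts $n_i$, generate no extra relation supported purely inside $U$ beyond the consequences of (5). A self-contained alternative would be a ping-pong argument using the action of $U$ on the Tits tree $\sX$: each $U_i$ fixes pointwise a half-tree $T_i$ with $T_1 \cap T_2 = \vc$ and $T_1 \cup T_2 = \sX$. Since a commutator $[u_1, u_2]$ with $u_i \in U_i \setminus \{1\}$ can act trivially on the set of ends $\partial \sX$, the ping-pong must be carried out on $\sX$ itself (or on a refined action, e.g., via the twin tree structure) and requires a more delicate combinatorial tracking of vertex orbits than the classical end-based setup.
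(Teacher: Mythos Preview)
Your proposal is correct and follows essentially the same route as the paper. The paper's two-line proof cites Tits \cite[Prop.~4]{Ti2} for the amalgamated decomposition $U \cong U_1 \ast_{U_0} U_2$ with $U_0 = U_1 \cap U_2$, then cites \cite{CaTh} for $U_0 = 1$; the ``Tits presentation of $U_+$'' whose legitimacy you flag as the main obstacle is precisely the content of that Tits reference, and your inversion-set argument (showing that no prenilpotent pair meets both $\Phi_+^1$ and $\Phi_+^2$) is a correct, self-contained replacement for the paper's citation of $U_0=1$.
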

\begin{proof} By \cite[Prop. 4]{Ti2},
$U$ is an amalgamated product of $U_1$ and $U_2$ along the intersection $U_0=U_1 \cap U_2$. However, 
$U_0=1$ \cite{CaTh}. The result follows.
\end{proof}

Now we are ready to tackle Property~(P2):
\begin{thm} \label{P2_hatG}
  Any element of order $p$ in $\widehat{G}$ is contained in a conjugate
  of the subgroup ${\sU}$ of $\widehat{G}$.
\end{thm}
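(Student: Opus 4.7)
The plan is to combine the action of $\widehat{G}$ on the Bruhat--Tits tree $\sX$ with the free-product decomposition of $U$ supplied by Lemma~\ref{free pr decomp}. Given an element $g$ of order $p$, I will first conjugate it into $\widehat{U}$ using tree arguments, then invoke a Kurosh-type theorem for pro-$p$ free products to conjugate $g$ further into $\widehat{U}_1$ or $\widehat{U}_2$, and finally observe that these two subgroups sit inside $\sU$ and $\sU_2 = w_1 \sU w_1^{-1}$ respectively.

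To place $g$ inside $\widehat{U}$, I use that $\widehat{G}$ acts type-preservingly on the $(q+1)$-regular tree $\sX$: a finite-order element cannot invert an edge, so $g$ must fix a vertex, and after conjugation this vertex is the standard one stabilized by $\widehat{P_i}$ for some $i \in \{1,2\}$. The group $\widehat{P_i}$ acts transitively on the $q+1$ edges incident to that vertex with stabilizer $\widehat{B}$, and since $q+1 \equiv 1 \pmod{p}$ the orbit-counting argument on $\langle g \rangle$ produces an edge fixed by $g$; a further conjugation places $g$ in $\widehat{B}$. Writing $\widehat{B} = \widehat{U} \rtimes H$ with $|H|$ coprime to $p$ and projecting $g^p = 1$ onto the quotient $H$ forces $g$ to lie in $\widehat{U}$.

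By Lemma~\ref{free pr decomp}, $U = U_1 * U_2$. Since pro-$p$ completion is a left adjoint, it commutes with coproducts, so $\widehat{U}$ is the pro-$p$ free product of the abelian pro-$p$ groups $\widehat{U}_1$ and $\widehat{U}_2$. Because $U_1$ and $U_2$ are retracts of $U$ via the obvious projections $U = U_1 * U_2 \twoheadrightarrow U_i$, each $\widehat{U}_i$ injects into $\widehat{U}$ and the subspace pro-$p$ topology on $U_i$ coincides with its intrinsic one. The crucial ingredient is the Kurosh-type theorem for pro-$p$ free products (see e.g.\ Ribes--Zalesskii): every torsion element of a pro-$p$ free product of pro-$p$ groups is conjugate to an element of one of the factors. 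Applied here, this furnishes $h \in \widehat{U}$ with $hgh^{-1} \in \widehat{U}_1 \cup \widehat{U}_2$. This step is the main obstacle, as it depends on a non-trivial input from the theory of pro-$p$ free products and on the subspace-topology verification just indicated.

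To conclude, observe that $U_1 \subset \sU$ and $\sU$ is closed in $\widehat{G}$, hence $\widehat{U}_1 \subset \sU$; analogously $\widehat{U}_2 \subset \sU_2 = w_1 \sU w_1^{-1}$. Composing the conjugations produced in the previous steps shows that $g$ lies in a $\widehat{G}$-conjugate of $\sU$, as required. Note that no special treatment of the prime $2$ is needed: type-preservation rules out edge inversions for every $p$.
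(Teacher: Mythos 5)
Your proposal is correct and follows essentially the same route as the paper: reduce to $g \in \widehat{U}$, use Lemma~\ref{free pr decomp} together with the fact that pro-$p$ completion commutes with free products to write $\widehat{U}$ as the free pro-$p$ product of $\widehat{U_1}$ and $\widehat{U_2}$, and then apply the Herfort--Ribes theorem that torsion in a free pro-$p$ product is conjugate into a factor, finishing with $\widehat{U_1}\subseteq \sU$ and $\widehat{U_2}\subseteq \sU_2=w_1\sU w_1^{-1}$. The only cosmetic difference is that you prove the initial reduction by an explicit tree/orbit-counting argument, where the paper simply invokes that $\widehat{U}$ is the Sylow pro-$p$ subgroup of $\widehat{G}$, so every element of order $p$ lies in a conjugate of it.
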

\begin{proof}
  Let $g \in \widehat{G}$ be an element of order $p$. Then $g$ lies in a conjugate of the Sylow pro-$p$-subgroup $\widehat{U}$ of $\widehat{G}$. Thus,
  without loss of generality we may assume that $g \in \widehat{U}$.
By Lemma \ref{free pr decomp}, $U=U_1 \ast U_2$ and, thus, $\widehat{U}=\widehat{U_1 \ast U_2}$.
Let $\amalg$ denote the free pro-$p$ product \cite[9.1]{RiZal}.
Notice that the pro-$p$ completion commutes with the free product \cite[9.1.1]{RiZal}:
$$ \widehat{U}=\widehat{U_1 \ast U_2} \cong \widehat{U_1} \amalg \widehat{U_2}.$$
Hertfort and Ribes \cite{HeRi} show that if a group decomposes as a free pro-$p$ product of two groups, then all the torsion is contained in a conjugate of one of the factors. Hence, $g$ is contained in a conjugate of one of $\widehat{U_i}$, $i=1,2$. Since ${\sU}= \cl(U_1 \times  -U_2)$, the proof is now complete. 
\end{proof}

Our next step is to explain why Property (P2) implies Property (P1). We begin our investigation with a lemma.

\begin{lemma}\label{isomorphic}
Let $G$ be a minimal Kac-Moody group of rank $2$ over $\F_q$, $\widehat{G}$ its local pro-$p$ completion and $\overline{G}$ its Caprace-R\'{e}my completion. 
Let $$
C \coloneqq C(\widehat{G}, \widehat{U}) = \bigcap_{ g \in \widehat{G}} g \widehat{U} g^{-1} \; . $$
Then 
$$\widehat{G}/ C \cong \overline{G}$$
as topological groups.
\end{lemma}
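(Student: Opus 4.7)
The plan is to show that $K := \ker\pi$ equals $C$, where $\pi : \widehat{G} \twoheadrightarrow \overline{G}$ is the natural surjection appearing in the chain of maps in the introduction; it exists because each $U_{+,n}$ has $p$-power index in $U$, so the local pro-$p$ topology refines the Caprace-R\'emy topology on $G$.

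First I would establish $K \subseteq \widehat{U}$. The $(B,N)$-pair structures identify both $\widehat{G}/\widehat{B}$ and $\overline{G}/\overline{B}$ with the edge set $\sX_1$ of the tree, and these identifications are compatible with $\pi$, so the induced map $\widehat{G}/\widehat{B} \to \overline{G}/\overline{B}$ is a bijection. This forces $\pi^{-1}(\overline{B}) = \widehat{B}$, and in particular $K \subseteq \widehat{B}$. Since $\widehat{B} = \widehat{U} \rtimes H$ and $\overline{B} = \overline{U} \rtimes H$ and $\pi$ restricts to the identity on the common subgroup $H \subseteq G$, the induced map $\widehat{B}/\widehat{U} \cong H \cong \overline{B}/\overline{U}$ is an isomorphism, so $K \subseteq \widehat{U}$. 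Normality of $K$ in $\widehat{G}$ then gives $K \subseteq \bigcap_{g \in \widehat{G}} g\widehat{U}g^{-1} = C$.

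The reverse inclusion requires transferring $C$ to $\overline{G}$. The image $\pi(\widehat{U})$ is the continuous image of the compact group $\widehat{U}$, hence closed in $\overline{G}$, and it contains the dense subgroup $U$ of $\overline{U}$; therefore $\pi(\widehat{U}) = \overline{U}$. Since $C$ is normal in $\widehat{G}$, its image $\pi(C)$ is a closed normal subgroup of $\overline{G}$ contained in $\overline{U}$, and normality forces $\pi(C) \subseteq h\overline{U}h^{-1}$ for every $h \in \overline{G}$. Since $\overline{G}$ acts transitively on $\sX_1$ with $\overline{U}$ fixing the fundamental edge, $\pi(C)$ fixes every edge of $\sX$. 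Consequently $\pi(C)$ lies in the kernel of the $\overline{G}$-action on $\sX$, which equals $Z(G)$ because $\overline{G}/Z(G) \cong \widecheck{G}$ is the faithful completion for the building topology. But $Z(G) \subseteq H$ and $H \cap \overline{U} = 1$ from the semidirect product decomposition of $\overline{B}$, so $\pi(C) \subseteq Z(G) \cap \overline{U} = 1$, and therefore $C \subseteq K$.

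Combining the two inclusions yields $K = C$, so $\pi$ descends to an abstract isomorphism $\widehat{G}/C \cong \overline{G}$. The topological isomorphism follows from the open mapping theorem applied to the continuous surjection $\pi$ between the $\sigma$-compact locally compact group $\widehat{G}$ and the locally compact Hausdorff group $\overline{G}$. The main obstacle in this plan is the identification of the kernel of the $\overline{G}$-action on $\sX$ with $Z(G)$; this rests on the Caprace-R\'emy structure theory linking $\overline{G}$ to the building-topology completion $\widecheck{G}$.
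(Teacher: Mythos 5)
Your argument is correct in outline but follows a genuinely different route from the paper. The paper's proof simply quotes Capdeboscq--Rumynin for the statement that the natural map $\pi:\widehat{G}\twoheadrightarrow\overline{G}$ is an open continuous homomorphism with $\ker\pi=C$, and then devotes itself entirely to the point you dispatch with the open mapping theorem: checking that the induced abstract isomorphism $\widehat{G}/C\to\overline{G}$ is a homeomorphism, using openness of $\pi$ and of the quotient map $\theta$. You instead re-derive the kernel identification: $\ker\pi\subseteq C$ via $\pi^{-1}(\overline{B})=\widehat{B}$, the splittings $\widehat{B}=\widehat{U}\rtimes H$ and $\overline{B}=\overline{U}\rtimes H$, and normality; and $C\subseteq\ker\pi$ by pushing $C$ into $\bigcap_{h\in\overline{G}} h\overline{U}h^{-1}$, hence into the kernel of the $\overline{G}$-action on $\sX$. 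This is a sound plan and buys a self-contained proof of exactly the fact the paper outsources, but at the cost of importing a different piece of structure theory: your identification of the kernel of the $\overline{G}$-action on $\sX$ with $Z(G)$ (equivalently $\overline{G}/Z(G)\cong\widecheck{G}$) is true, yet it is itself a Caprace--R\'emy-level input comparable in depth to the statement being proven, as you acknowledge. You can avoid it entirely: since $\pi(C)\subseteq\overline{U}$ and $\overline{U}=\cl(U)$ acts faithfully on the tree --- immediate from the definition of the metric $d_+$, because an element of $\cl(U)$ acting trivially on every ball around $\vc$ is a limit of elements of $U_{+,n}$ with $n\to\infty$ and hence equals $1$ --- you get $\pi(C)=1$ directly, with no reference to $Z(G)$ or $\widecheck{G}$. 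A few facts you use without comment are standard but should be cited from Caprace--R\'emy and Capdeboscq--Rumynin: surjectivity of $\pi$, the decomposition $\overline{B}=\overline{U}\rtimes H$, the equalities $G\cap\widehat{B}=B$ and $\widehat{G}=G\widehat{B}$ needed for the coset-space bijection, and $\sigma$-compactness of $\widehat{G}$ (it equals $G\widehat{B}$ with $G$ countable and $\widehat{B}$ compact) for the open mapping theorem.
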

\begin{proof}
Recall that for two groups to be isomorphic as topological groups, we need to show existence of a continuous abstract group isomorphism with a continuous inverse. 
Let 
$\pi: \widehat{G} \twoheadrightarrow \overline{G}$ be the natural map. This is an open continuous homomorphism with kernel $\ker(\pi)=C$ \cite{CaRu}. 
Consider the group $\widehat{G}/ C$ as a topological group with respect to the quotient topology coming from $\widehat{G}$. 
We have a commutative diagram: 
$$\begin{tikzcd}
  \widehat{G} \arrow[r, "\pi"] \arrow[dr, "\theta"]
  & \overline{G}\\
  &\widehat{G}/ C \arrow[u, "\widebar{\pi}"]
 \end{tikzcd}$$
where $\theta$ is the quotient map and $\widebar{\pi}$ is the natural map induced by $\pi$ and factoring through $\theta$. Note that since $\ker(\pi)=C$, $\widebar{\pi}$ is in fact an isomorphism of abstract groups. Moreover, the map $\theta$ is a continuous, open surjection \cite[5.16, 5.17]{HR}. Thus, if $N \subseteq \overline{G}$ is open, 
$$\widebar{\pi}^{-1}(N)=\theta \big(\pi^{-1}(N) \big)$$
is open, giving the continuity of $\widebar{\pi}$.
Furthermore, $\widebar{\pi}$ also has a continuous inverse. Suppose $L \subseteq \widehat{G}/ C$ is open and $\psi \coloneqq \widebar{\pi}^{-1}$, then
$$ \psi^{-1}(L)=\pi \big(\theta^{-1}(L) \big), $$
which is open by continuity of $\theta$, openness of $\pi$ and commutativity of the diagram.
\end{proof}

Since $C \leq \widehat{U}\leq  \widehat{G}$, where $\widehat{U}$ is a compact open and closed (since $\widehat{G}$ is Hausdorff)
subgroup of $G$, 
$C$ is a closed compact pro-$p$ subgroup of $\widehat{G}$.

Observe also that, as $C$ is compact, the quotient map
$\theta: \widehat{G}\rightarrow \widehat{G}/C$ is closed \cite[5.18]{HR}.
It follows from Lemma~\ref{isomorphic} that
the natural map
$\pi: \widehat{G}\rightarrow \overline{G}$ is closed.


\begin{lemma} \label{first_count}
The group $\widehat{G}$ is first countable.
\end{lemma}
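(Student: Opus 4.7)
The plan is to reduce first countability of $\widehat{G}$ to first countability at $1$ of its compact open subgroup $\widehat{U}$, and then to exhibit a countable fundamental system of open neighbourhoods of $1$ in $\widehat{U}$. Since $\widehat{B}=\widehat{U}\rtimes H$ is open in $\widehat{G}$ with $H$ finite, any countable basis at $1$ in $\widehat{U}$ lifts via the semidirect product (and the finiteness of $H$) to one in $\widehat{B}$, hence in $\widehat{G}$. So the whole question reduces to the compact pro-$p$ group $\widehat{U}$, for which first countability at $1$ is equivalent to metrizability.

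To construct such a basis, I would exploit two ingredients coming from the geometry of the tree $\sX$. First, the building-theoretic filtration $U=U_{+,0}\supseteq U_{+,1}\supseteq\cdots$ by the fixators of larger and larger balls around the fundamental edge $\vc$ forms a descending chain in $U$ with $\bigcap_n U_{+,n}=\{1\}$, and each finite quotient $U/U_{+,n}$ is a $p$-group because the action of $U$ on the $(q+1)$-regular tree at depth $\leq n$ is built out of the $p$-groups $U_\alpha$. Hence $U_{+,n}\in\sF$ and $\widehat{U_{+,n}}$ is open in $\widehat{U}$. Second, each finite $p$-quotient $U/U_{+,n}$ has only finitely many subgroups, so for each $n$ one obtains only finitely many open subgroups of $\widehat{U}$ containing $\widehat{U_{+,n}}$. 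Taking the (countable) union over $n\in\N$ and closing under intersection yields a countable descending chain $V_1\supseteq V_2\supseteq\cdots$ of open subgroups of $\widehat{U}$.

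The main obstacle is verifying that this countable chain is cofinal in $\sF$: every subgroup $V\in\sF$ should contain some $V_k$. This is the crucial property of the local pro-$p$ topology of \cite{CaRu}: the ``locality'' is precisely the requirement that every such $V$ contain some $U_{+,n}$, so that $V$ is detected by a subgroup of the finite quotient $U/U_{+,n}$ and therefore sits above (or is already among) one of the $V_k$. Once cofinality is in place, $\{V_k\}$ is a countable fundamental system of neighbourhoods of $1$ in $\widehat{U}$ with trivial intersection (the latter follows from $\bigcap_n U_{+,n}=\{1\}$ together with $\widehat{U}$ being Hausdorff, as already used implicitly in Lemma~\ref{isomorphic}), and first countability of $\widehat{G}$ follows.
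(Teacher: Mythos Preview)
Your reduction to first countability of $\widehat{U}$ is fine, and so is the observation that each $U/U_{+,n}$ is a finite $p$-group. The genuine gap is the cofinality step: it is \emph{not} true that every $V\in\sF$ contains some $U_{+,n}$, and this is not what ``locality'' in \cite{CaRu} means. The filtration $(U_{+,n})_n$ defines the Caprace--R\'emy (building) topology on $U$, whose completion is $\overline{U}$, whereas $\sF$ defines the full pro-$p$ topology on $U$, whose completion is $\widehat{U}$. By Lemma~\ref{isomorphic} the restriction of $\pi$ gives $\widehat{U}/C\cong\overline{U}$, and the paper explicitly records that $\widehat{G}\neq\overline{G}$ in rank~$2$, so $C\neq\{1\}$. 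Hence the pro-$p$ topology on $U$ is \emph{strictly} finer than the building topology, and there exist $V\in\sF$ containing no $U_{+,n}$. Your countable family, being built entirely out of subgroups lying above some $U_{+,n}$, is therefore only a neighbourhood basis for $\overline{U}$, not for $\widehat{U}$; in effect you have proved that $\overline{G}$ is first countable, which was never in doubt.

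The paper avoids this trap by not going through the building filtration at all. It uses the free product decomposition $U=U_1\ast U_2$ (Lemma~\ref{free pr decomp}) to write $\widehat{U}$ as the pro-$p$ completion of a free product of two countable elementary abelian $p$-groups, and then invokes the criterion \cite[Rem.~2.6.7]{RiZal}: a profinite group is first countable as soon as it admits a countable generating set converging to $1$. An $\F_p$-basis of each $U_i$ provides such a sequence. If you want to salvage a filtration-style argument, you would need a countable cofinal family in $\sF$ that genuinely sees the pro-$p$ topology; one natural choice is the Zassenhaus/lower $p$-series of $U$, whose terms have $p$-power index because $U$ is generated by finitely many elements modulo each term (again a consequence of the countable generating set), but this is essentially the same input as the paper's proof.
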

\begin{proof}
  It suffices to show that $\widehat{U}$ is first countable
  since $\widehat{U}$ is open in $\widehat{G}$.

  To establish the first countability of $\widehat{U}$, it is sufficient to show
  that it admits a countably infinite generating set which converges to 1 \cite[Rem. 2.6.7]{RiZal}. By Lemma \ref{free pr decomp}, 
  $$ \widehat{U}=\widehat{U_1 \ast U_2} \; .$$
  The group $U_i$ is a countably dimensional vector space over the field $\F_q$.
  Its $\F_p$-basis $e^{(i)}_k$ forms a countable generating sequence, converging
  to 1 in the pro-$p$-topology.
  It follows that  $e^{(1)}_1 , e^{(2)}_1,  e^{(1)}_2 , e^{(2)}_2 \ldots$ is 
  a countable generating sequence of $\widehat{U}$, converging to 1. 
%
\end{proof}

Since $\widehat{G}$ is first countable and, thus, metrizable,
its topology is determined by sequences.
In particular, in a metrizable topological space
compactness is equivalent to sequential compactness. 

\begin{prop}(cf. \cite[Cor. 4.3.]{CaTh}) \label{limit point}
  Let $u$ be an element of  ${\sU}$.
  Then there exists $g \in \widehat{G}$, such that the sequence 
$$x_n \coloneqq g ^n u g^{-n}\, , \ n \in \N $$
has a limit point in $C$.
\end{prop}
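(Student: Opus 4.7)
The plan is to transfer the question across the continuous projection $\pi\colon\widehat{G}\twoheadrightarrow\overline{G}$ from Lemma~\ref{isomorphic}: first show that in $\overline{G}$ the image sequence converges to~$1$, then lift the conclusion back to $\widehat{G}$ using properness of~$\pi$.

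Take $g\in N\leq\widehat{G}$ to be a lift of the translation $w_1w_2\in W$, for instance $g=n_1(1)n_2(1)$. Since $W$ is infinite dihedral, $w_1w_2$ has infinite order, and $g$ acts on the tree $\sX$ as a hyperbolic isometry with axis $\ell$ in the fundamental apartment, translating toward one end $\xi_+\in\partial\sX$. The key geometric observation is that every root group $U_\alpha$ with $\alpha\in\Phi_+^1\cup(-\Phi_+^2)$ pointwise fixes a half-tree $H_\alpha\subseteq\sX$, and the roots in $\Phi_+^1\cup(-\Phi_+^2)$ are precisely those whose half-tree meets $\ell$ in a ray towards the opposite end~$\xi_-$ (replacing $g$ by $g^{-1}$ if needed to align orientations). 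By relation~(5) in the definition of $G$, the subgroups $U_1$ and $-U_2$ centralise each other, so $U_1\cdot(-U_2)$ pointwise fixes $\bigcap_\alpha H_\alpha$, and taking closures, so does~$\sU$. In particular $u$ pointwise fixes a closed subtree $Y_u\subseteq\sX$ that is a neighbourhood of $\xi_-$; then $g^nug^{-n}$ pointwise fixes $g^nY_u$, and because $g$ translates away from $\xi_-$, these subtrees expand to exhaust $\sX\setminus\{\xi_+\}$ and hence eventually contain any prescribed finite ball around the base chamber $\vc$. The local building topology on $\overline{G}$ is defined precisely so that this forces $\pi(x_n)\to 1$ in $\overline{G}$.

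To lift: $\pi$ is a continuous surjective homomorphism with compact kernel~$C$, and hence is a closed map (as noted just after Lemma~\ref{isomorphic}) with compact fibres, so $\pi$ is proper. Choose a compact neighbourhood $K$ of $1$ in $\overline{G}$; then $\pi^{-1}(K)\subseteq\widehat{G}$ is compact and contains $x_n$ for all sufficiently large~$n$. By first countability of $\widehat{G}$ (Lemma~\ref{first_count}) we extract a convergent subsequence $x_{n_k}\to y$ in $\pi^{-1}(K)$. Continuity of $\pi$ yields $\pi(y)=\lim\pi(x_{n_k})=1$, so $y\in\ker(\pi)=C$, producing the desired limit point.

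The main obstacle is the geometric matching in the first step: verifying that the root subset $\Phi_+^1\cup(-\Phi_+^2)$ corresponds to a family of half-trees all sharing a common end of $\ell$ opposite to the direction in which $g$ translates. This is straightforward in the rank~$2$ infinite dihedral apartment once orientations are pinned down (and $g$ is possibly replaced by $g^{-1}$), but it is the one place where the combinatorics of $\sU$ must really be used. The remaining ingredients—properness of the quotient map by a compact normal subgroup, and the passage to a convergent subsequence by first countability and compactness—are formal consequences of the topological structure.
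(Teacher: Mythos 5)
Your second step (lifting the convergence $\pi(x_n)\to 1$ back to $\widehat{G}$) is essentially the paper's argument: the paper takes a compact open $K\leq\widehat{G}$, notes $\pi(x_n)\in\pi(K)$ eventually, hence $x_n\in KC$ with $KC$ compact, and extracts a convergent subsequence whose limit maps to $1$ and so lies in $C$; your properness formulation is the same substance. The real difference is the first step: the paper does not prove that $\pi(x_n)\to 1_{\overline{G}}$ geometrically at all — it quotes \cite[Cor.\ 4.3]{CaTh} and only uses surjectivity of $\pi$ to choose a lift $g\in\widehat{G}$. Your attempt to reprove that input contains genuine gaps.

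First, the assertion that $\sU$ pointwise fixes $\bigcap_\alpha H_\alpha$ and that this intersection is a neighbourhood of $\xi_-$ is false. The half-apartments (and the half-trees fixed by the corresponding root groups) attached to the roots in $\Phi_+^1$ do share a common half-tree towards $\xi_-$, but those attached to $-\Phi_+^2$ have walls marching \emph{towards} $\xi_-$, so their fixed half-trees recede to the end and the total intersection is empty. What is true is a statement for each individual element: every element of the abstract group $U_1\times(-U_2)$ involves only finitely many root groups and hence fixes \emph{some} half-tree containing $\xi_-$, depending on the element. To extend this to an arbitrary $u\in\sU=\cl(U_1\times(-U_2))$ (which need not be a finite product) you must say something about the structure of the closure — e.g.\ that $-U_2$ is discrete, hence closed, and $\cl(U_1)$ is compact, so $\sU=\cl(U_1)\cdot(-U_2)$, after which the $\cl(U_1)$-factor fixes the common half-tree of $U_1$ and the $-U_2$-factor is a finite product. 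Second, even granting a fixed half-tree $Y_u$ with $g^nY_u$ exhausting $\sX$, the conclusion $\pi(x_n)\to 1$ does not follow from "fixes arbitrarily large balls around $\vc$": the basic neighbourhoods $U_{+,n}$ of $1$ in $\overline{G}$ are by definition subsets of $U$, and $\overline{G}$ retains the centre precisely because fixing the whole building does not force an element to be near $1$ (that is the difference between $\overline{G}$ and $\widecheck{G}$). So you must additionally verify that $\pi(x_n)$ lies in $\overline{U}$ for large $n$; this holds because conjugation by $g^n$ sends the finitely many roots of $-\Phi_+^2$ occurring in the $-U_2$-part into $\Phi_+^1$ and maps $\cl(U_1)$ into itself, but it is a necessary step that your sketch omits. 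With these two repairs your route would work, but as written the key convergence claim is not established — which is exactly the point the paper outsources to \cite{CaTh}.
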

\begin{proof}
%
%
%

Since $\pi$ is surjective,  \cite[Cor. 4.3]{CaTh} implies that there exists $g \in \widehat{G}$, such that
$$\lim_{n \to \infty}\pi(x_n)=\lim_{n \to \infty} \pi(g)^{n} \pi(u) \pi(g)^{-n} = 1_{\overline{G}}. $$

  Take a compact open subgroup $K\leq \widehat{G}$.
  Since $\pi (K)$ is a neighbourhood of $1_{\overline{G}}$,
  there exists $N\in\mathbb{N}$, such that $\pi(x_n) \in \pi(K)$
  for all $n>N$.
  Hence, $x_n \in K C$
  for all $n>N$.

  It remains to observe that ${K}  C$
  is compact, and  so $(x_n)$ contains a convergent subsequence $(y_n)$
  in $K C$. Its limit $z=\lim y_n$ must belong to
  $C$ because
   $\pi(z) = \lim \pi(y_n) = \lim \pi(x_n) = 1_{\overline{G}}$.

\end{proof}

We are nearly ready to prove property (P1) for $\widehat{G}$.
The final ingredient we need is the following result for cocompact lattices:

\begin{lemma} \label{CoLa} (cf. \cite[p. 10]{GGPS})
  Let $\G \leq \widehat{G}$ be a cocompact lattice.
For each $u \in \G$ its conjugacy class
$$u^{\widehat{G}} \; =\;  \{g^{-1} u g\; |\; g \in \widehat{G} \}$$
is a closed subset of $\widehat{G}$. 
\end{lemma}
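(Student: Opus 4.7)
This is a standard fact about cocompact lattices in locally compact groups, and the strategy I would use is the classical one from \cite{GGPS}: exploit cocompactness to reduce an arbitrary limit of conjugates to a limit of conjugates by lattice elements, and then invoke discreteness of $\G$ to force the sequence to be eventually constant. Since $\widehat{G}$ is first countable by Lemma~\ref{first_count}, the topology is determined by sequences, so it suffices to show sequential closedness.

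\textbf{Main steps.} First I would pick a compact subset $K \subseteq \widehat{G}$ such that $\G K = \widehat{G}$; such a $K$ exists because $\G\setminus\widehat{G}$ is compact and the quotient map $\widehat{G}\to\G\setminus\widehat{G}$ is open, so one can lift a finite cover of the quotient by relatively compact open sets. Now suppose $y \in \widehat{G}$ lies in the closure of $u^{\widehat{G}}$, and choose a sequence $g_n \in \widehat{G}$ with $g_n^{-1} u g_n \to y$. Decompose $g_n = \g_n k_n$ with $\g_n \in \G$ and $k_n \in K$. Then
\[
k_n (g_n^{-1} u g_n) k_n^{-1} \;=\; \g_n^{-1} u \g_n \;\in\; \G.
\]
By compactness of $K$ and metrizability, after passing to a subsequence, $k_n \to k$ for some $k \in K$, and hence $\g_n^{-1} u \g_n \to k y k^{-1}$.

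Next I would use that $\G$ is discrete in $\widehat{G}$: a convergent sequence in a discrete subspace of a Hausdorff space is eventually constant. Therefore there exists $N$ such that $\g_n^{-1} u \g_n = k y k^{-1}$ for all $n \geq N$. Setting $h \coloneqq \g_N k$, we get
\[
h^{-1} u h \;=\; k^{-1} (\g_N^{-1} u \g_N) k \;=\; k^{-1}(k y k^{-1}) k \;=\; y,
\]
so $y \in u^{\widehat{G}}$, proving closedness.

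\textbf{Anticipated obstacle.} The only subtle point is ensuring the decomposition $g_n = \g_n k_n$ with $k_n$ in a fixed compact set; this requires producing a compact fundamental domain (or a compact transversal) for $\G\setminus\widehat{G}$, which follows from cocompactness together with local compactness and the openness of the quotient map. Everything else is soft topology, and I do not foresee further difficulties given the first countability established in Lemma~\ref{first_count} and the fact that $\widehat{G}$ is Hausdorff.
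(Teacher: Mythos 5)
Your argument is correct and is essentially the paper's own proof: both produce a compact set $\widetilde{K}$ with $\G\widetilde{K}=\widehat{G}$ from cocompactness, use first countability (Lemma~\ref{first_count}) to pass to sequences, rewrite $g_n=\g_n k_n$ so the conjugating elements land in $\G$, extract a convergent subsequence of $(k_n)$, and finish by discreteness of $\G$. The only point worth making explicit is that the limit $kyk^{-1}$ actually lies in $\G$ (a discrete subgroup of the Hausdorff group $\widehat{G}$ is closed), which is what licenses "eventually constant"; the paper elides this in exactly the same way, so no substantive difference remains.
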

\begin{proof}
  Let us show that $\G$ admits a compact fundamental domain $\widetilde{K}$ in $\widehat{G}$,  i.e.,
  a compact subset $\widetilde{K}$ such that $\widehat{G}= \G \widetilde{K}$.
  Take a compact open subgroup $K \leq \widehat{G}$.
  Consider the quotient map 
  $\theta: \widehat{G} \twoheadrightarrow \G \setminus \widehat{G}$.
  For each $x\in\widehat{G}$ the set
  $\G x K$ is open and $\G$-equivariant.
  By the definition of the quotient topology,
  $\theta (x) K = \theta (\G x K)$ is open in $\G \setminus \widehat{G}$.
  Thus, $\{ \theta (x) K \,\mid\, x\in\widehat{G} \}$
  is an open cover of $\G \setminus \widehat{G}$.
  But $\G$ is cocompact, so we can choose a finite subcover
$\{ \theta (x_i) K \,\mid\, i = 1, \ldots, n \}$.
  It follows that $\widetilde{K}\coloneqq \bigcup_{i=1}^n x_iK$ is a compact fundamental domain.

  The rest of the argument follows Gelfand, Graev and Piatetsky-Shapiro \cite[p. 10]{GGPS}.
  Take $x\in \cl(u^{\widehat{G}})$.  Since $\widehat{G}$ is first countable, there exists a sequence
   $(g_i^{-1} u g_i)$ with $g_i \in \widehat{G}$, $i\in\mathbb{N}$,
  convergent to $x$.
  Let us write each $g_i$ as  $u_i k_i$ for some $u_i\in \G$, $k_i\in \widetilde{K}$.
  Since $\widetilde{K}$ is compact and first countable,
  we can choose a convergent subsequence of $(k_i)$. Thus, without loss
  of generality $(k_i)$ converges to some $k\in\widetilde{K}$. Observe that
  $$
  u_i^{-1} u u_i =
  k_ig_i^{-1} u g_ik_i^{-1} =
  k_i(g_i^{-1} u g_i)k_i^{-1} \longrightarrow
    k(\lim g_i^{-1} u g_i)k^{-1} = 
  kxk^{-1}\; .
  $$
  Since $\G$ is discrete, $u_n^{-1} u u_n = kxk^{-1}$
  for all sufficiently large $n$, so that
  $x\in u^{\widehat{G}}$.
\end{proof}

Note that since the map $\pi: \widehat{G} \to \overline{G}$ is closed,
the set $\pi(u^{\widehat{G}})=\pi(u)^{\overline{G}}$ is closed 
for every $u$ from any cocompact lattice  $\G$. 

\begin{thm} \label{P1_hatG}
  A cocompact lattice in $\widehat{G}$ does not contain elements of order $p$.
\end{thm}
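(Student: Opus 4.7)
My plan is a proof by contradiction: suppose $\Gamma \leq \widehat{G}$ is a cocompact lattice containing an element $u$ of order $p$, and derive $u = 1$.

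First I reduce to the case $u \in \mathcal{U} \cap \Gamma$. By Theorem~\ref{P2_hatG}, we have $u = kvk^{-1}$ for some $k \in \widehat{G}$ and $v \in \mathcal{U}$. The conjugate $k^{-1}\Gamma k$ is still a cocompact lattice and it contains $v$, so after replacing $\Gamma$ by $k^{-1}\Gamma k$ and $u$ by $v$, I may assume $u \in \mathcal{U} \cap \Gamma$.

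I then combine the two preceding results. Proposition~\ref{limit point}, applied to $u \in \mathcal{U}$, produces $g \in \widehat{G}$ and a subsequence $(y_n)$ of $(g^n u g^{-n})$ converging to an element $z \in C$. Each $y_n$ lies in the $\widehat{G}$-conjugacy class $u^{\widehat{G}}$, which is closed in $\widehat{G}$ by Lemma~\ref{CoLa} (the hypothesis $u \in \Gamma$ being used here); hence $z \in u^{\widehat{G}}$, so $z = huh^{-1}$ for some $h \in \widehat{G}$. Since $z \in C = \ker\pi$, applying $\pi$ gives $\pi(u) = 1$, so $u \in C \cap \Gamma$. Equivalently, by the remark immediately preceding the theorem, $\pi(u)^{\overline{G}}$ is closed in $\overline{G}$ and contains $\lim \pi(y_n) = 1_{\overline{G}}$, again forcing $\pi(u) = 1$.

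The final step, which I expect to be the main obstacle, is to conclude $u = 1$ from $u \in \mathcal{U} \cap C$. The cleanest route is to establish $\mathcal{U} \cap C = \{1\}$, i.e., that $\pi$ restricts to an injective map on $\mathcal{U}$. A natural attempt uses the product description $\mathcal{U} = \cl(U_1 \cdot (-U_2))$ together with $C \subseteq \widehat{U}$: decomposing a putative element $u \in \mathcal{U} \cap \widehat{U}$ into its positive and negative components and applying a Bruhat-type separation argument should annihilate the $(-U_2)$-component, reducing matters to the claim $\widehat{U_1} \cap C = \{1\}$. This last claim can in turn be approached via the Hertfort--Ribes description of torsion in the free pro-$p$ product $\widehat{U} \cong \widehat{U_1} \amalg \widehat{U_2}$ (already invoked in the proof of Theorem~\ref{P2_hatG}), combined with the fact that $C$, lying in every conjugate of $\widehat{U}$, fixes the tree $\sX$ pointwise whereas a nontrivial element of $\widehat{U_1}$ permutes nontrivially the edges through its fixed vertex. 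Pinning down these structural facts about $\mathcal{U}$ and $C$ inside $\widehat{G}$ is where the real work will lie.
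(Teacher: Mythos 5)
Your argument is, up to the very last step, the paper's own proof: reduce via Theorem~\ref{P2_hatG} after conjugating $\Gamma$, apply Proposition~\ref{limit point} to get a limit point of $(g^nug^{-n})$ in $C$, and use closedness of $u^{\widehat{G}}$ (Lemma~\ref{CoLa}) to conclude that a conjugate of $u$ --- hence, $C$ being normal, $u$ itself --- lies in $C=\ker\pi$; you are also right not to invoke Lemma~\ref{trivial intersection} here, which would be circular. The difference is the endgame, and this is where your write-up has its one real gap. The paper never needs $\mathcal{U}\cap C=\{1\}$: it uses the \emph{proof} of Theorem~\ref{P2_hatG} (Herfort--Ribes applied to $\widehat{U}\cong\widehat{U_1}\amalg\widehat{U_2}$) to place $u$ in a conjugate of $\widehat{U_1}$ or $\widehat{U_2}$ from the outset, so the contradiction only requires that $C$ acts trivially on the Tits building while a non-identity element of $\widehat{U_1}$ does not. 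Your proposed ``Bruhat-type separation'' inside $\mathcal{U}=\cl(U_1\times -U_2)$, meant to annihilate the $(-U_2)$-component, is unproved (it would need something like $\cl(-U_2)\cap\widehat{U}=\{1\}$ in the completion, which is established nowhere) --- but it is also unnecessary: once you know $u\in C\subseteq\widehat{U}$ and $|u|=p$, Herfort--Ribes applies again, and normality of $C$ reduces you directly to the claim $\widehat{U_1}\cap C=\{1\}$, i.e.\ precisely to the building-action fact above, which is the assertion the paper itself uses (without further proof) to finish. So your proposal is essentially the paper's argument; replace the sketched separation step by the sharper output of the proof of Theorem~\ref{P2_hatG} and it closes.
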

\begin{proof}
Let $\Gamma$ be a cocompact lattice.
  Consider $u \in \G$ such that $|u|=p$.
  By the proof of Theorem~\ref{P2_hatG}, $u$ is contained in a conjugate of
  $\widehat{U_1}$ or $\widehat{U_2}$.
  Without loss of generality, $u\in\widehat{U_1}$.

  By Proposition~\ref{limit point} there exists a $g \in \widehat{G}$, such that the sequence
$$x_n \coloneqq g^n u g^{-n},\ \ n \in \N ,$$
    has a limit point $x\in C$.
    By construction, the sequence $(x_n)$ lies in the closed set $u^{\widehat{G}}$.
    Thus $x \in u^{\widehat{G}}\cap C$.
    Since $C$ acts trivially on the Tits building of $G$, so does $x$.
    This is a contradiction: non-identity elements of  $\widehat{U_1}$
    do not act trivially on the Tits building of $G$.
\end{proof}

Theorem \ref{P1_hatG} and Theorem \ref{P2_hatG} show that $\widehat{G}$ is $p$-well-behaved.

\section{Push and pull for cocompact lattices} \label{three}

Given a continuous homomorphism $\theta : H \rightarrow J$ of locally compact groups
with a non-discrete kernel,
one should not expect a relationship between lattices in $H$ and $J$.
If $\Gamma$ is a lattice in $H$, then $\theta (\G)$ is not necessarily a lattice. In the opposite direction, if $\G$ is a lattice in $J$, then 
 $\theta^{-1} (\G)$ is never discrete.
 However, we can push and pull cocompact lattices along the quotient map.
  
\begin{prop} \label{hat(G) to bar(G)}
  Let $\theta: H \rightarrow J$ be
  a continuous  homomorphism of locally compact groups with
  a compact kernel $K$ and a closed cocompact image.
  If $\G$ is a cocompact lattice in $H$, then
  $\theta(\G)$ is a cocompact lattice in $J$.
\end{prop}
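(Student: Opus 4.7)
The plan is to verify the two conditions defining a cocompact lattice for $\theta(\G) \leq J$: that $\theta(\G)$ is discrete in $J$, and that $\theta(\G) \setminus J$ is compact. I would handle discreteness first, as it is the main content, and then derive cocompactness from an elementary combination of compact fundamental sets.

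The key preliminary for discreteness is that $\theta$ is \emph{proper}, i.e., preimages of compact sets are compact. This is where both hypotheses enter essentially: for any compact $C \subseteq J$, the intersection $C \cap \theta(H)$ is compact (since $\theta(H)$ is closed), and $\theta$ factors as $H \twoheadrightarrow H/K \hookrightarrow \theta(H)$ where the quotient by the compact normal $K$ is proper by standard locally compact group theory. In the paper's main application to $\pi: \widehat{G} \to \overline{G}$, this is immediate from the remarks following Lemma~\ref{isomorphic}.

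Given properness, suppose for contradiction that $\theta(\G)$ is not discrete at $1$. Then there is a net $(\theta(\gamma_\alpha))$ in $\theta(\G) \setminus \{1\}$ converging to $1 \in J$. Properness traps the $\gamma_\alpha$ eventually inside a fixed compact subset of $H$, so a subnet $\gamma_{\alpha_\beta}$ converges to some $\gamma \in H$. Since $\G$ is closed in the Hausdorff group $H$ (being discrete) and discrete, the subnet is eventually constant, equal to $\gamma \in \G$. Applying $\theta$ yields $\theta(\gamma) = 1$, and hence $\theta(\gamma_{\alpha_\beta}) = 1$ eventually, contradicting the choice of the net.

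For cocompactness, pick a compact $D_0 \subseteq H$ with $H = \G \cdot D_0$ (which exists by the argument in the proof of Lemma~\ref{CoLa}), and a compact $D_1 \subseteq J$ with $J = \theta(H) \cdot D_1$, available because $\theta(H)$ is closed and cocompact in $J$. Substituting gives
$$
J \;=\; \theta(H) \cdot D_1 \;=\; \theta(\G) \cdot \theta(D_0) \cdot D_1,
$$
and $\theta(D_0) \cdot D_1$ is compact as a product of compact sets. The main obstacle is establishing properness of $\theta$ cleanly from the hypotheses; once that is in hand, both halves of the argument are routine.
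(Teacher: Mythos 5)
Your overall strategy is sound, and the cocompactness half is both correct and genuinely simpler than the paper's: where the paper runs a net argument through the contracted product $(\theta(\G)\setminus\theta(H))\times_{\theta(H)}J$ to show $\theta(\G)\setminus J$ is compact, you simply combine two compact sets, $H=\G D_0$ and $J=\theta(H)D_1$, to get $J=\theta(\G)\,\theta(D_0)D_1$. Your discreteness argument is also fine \emph{once properness of $\theta$ is granted}. The paper's proof of discreteness is, in substance, a local version of the same idea: it claims $\theta^{-1}(Va)$ is compact for a compact neighbourhood $V$ of the identity and then extracts an eventually constant subnet inside $\G$.

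The genuine gap is exactly the step you flag as ``the main obstacle'', and your proposed justification of it does not work. In the factorisation $H\twoheadrightarrow H/K\hookrightarrow\theta(H)$, only the first arrow is proper for free; the second is a continuous bijective homomorphism of locally compact groups (note $\theta(H)$ is locally compact, being closed in $J$), and such a map need not be a homeomorphism --- the open mapping theorem requires $\sigma$-compactness of the source. Worse, properness cannot be recovered from the hypotheses as literally stated: take $H=\R$ with the discrete topology, $J=\R$ with its usual topology, and $\theta$ the identity. The kernel is trivial, the image is closed and cocompact, yet $\theta^{-1}([0,1])$ is not compact; moreover $\G=H$ is a cocompact lattice in $H$ whose image is not discrete in $J$, so no argument can close the gap without an extra assumption. (The paper's own proof leans on the same point: compactness of $\theta^{-1}(Va)$ via the map $K\times_K\theta^{-1}(Va)\to Va\cap\theta(H)$ tacitly requires lifting convergent nets from $\theta(H)$ back to $H$, i.e.\ that $H/K\to\theta(H)$ be a homeomorphism.) To make your proof complete you should add, and then use, the hypothesis that $\theta$ is open onto its image (equivalently, with compact kernel and closed image, that $\theta$ is proper), or that $H$ is $\sigma$-compact; this is harmless for the intended application, since $\pi:\widehat{G}\to\overline{G}$ is an open map with compact kernel $C$, hence proper, as you note.
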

\begin{proof}
  Let us show  that   $\theta (\G)\setminus J$ is compact. The multiplication defines a continuous bijective map from the fibre product:  
\begin{equation}\label{argument}
\eta: F\coloneqq (\theta (\G)\setminus \theta (H)) \times_{\theta (H)} J\longrightarrow \theta (\G)\setminus J \, , \ \
(\theta (\G) a,b) \mapsto \theta (\G) ab \, .
\end{equation}
It suffices to prove that the fibre product $F$ is compact.
For this we need to find a convergent subnet of an arbitrary net $(\theta (\G)x_i, y_i)_{i\in \bI}$ in $F$.

Since $\theta (H)\setminus J$ is compact, the net $(\theta (H)y_i)_{i\in \bI}$ has a convergent subnet. Without loss of generality, $(\theta (H)y_i)_{i\in \bI}$ itself is convergent. This means that each $y_i$ can be written as $y_i=h_i z_i$ with $h_i\in \theta (H)$ and the net $(z_i)_{i\in \bI}$ convergent in $J$.
Since $\theta (\G)\setminus \theta (H)$ is compact, the net $(\theta (\G)x_ih_i)_{i\in \bI}$ has a convergent subnet $(\theta (\G)x_ih_i)_{i\in \bJ}$.
Finally, since $(\theta (\G)x_i, y_i) =(\theta (\G)x_ih_i, z_i)$, 
it follows  that $(\theta (\G)x_i, y_i)_{i\in \bI}$ is a convergent subnet we sought.

  
It remains to show that $\theta(\G)$ is discrete.
  Suppose not.  
  Then we can pick a net $(x_i)_{i\in \bI}$ in $\G$, convergent to some $a \in \theta(\G)$, such that $\theta(x_i)\neq a$ for all $i$. 
  
Choose a compact neighbourhood of identity $V\subseteq J$.
The inverse image $\theta^{-1}(Va)$ is compact by
the argument for~\eqref{argument} with  the new key map 
$$
\eta: K \times_{K} \theta^{-1}(Va) \longrightarrow Va\cap\theta (H) \, , \ \
(k,c) \mapsto \theta (c) \, .
$$
There exists an ordinal $\bL<\bI$ such that $\theta(x_i) \in  Va$ and, consequently, $x_i \in  \theta^{-1} (Va)$ for all $i \geq \bL$.
Hence, we can find a convergent subnet $(y_j)_{j\in \bJ}$ (of $x_i$).
Since $\G$ is discrete, there exists an ordinal $\bM<\bJ$, such that $y_j = y_{\bM}$ for all $j\geq \bM$. 
Then
$a = \lim \theta (x_i) = \lim \theta (y_j) = \theta (y_{\bM})$, 
a contradiction. 
\end{proof}
We can control the push-forward of cocompact lattices
along the map
$\pi: \widehat{G} \to \overline{G}$
more tightly than for a general map:
\begin{lemma} \label{trivial intersection}
  If $\G$ is a cocompact lattice in $\widehat{G}$,
  then $\G \cap C =\{1_{\widehat{G}}\}$. 
\end{lemma}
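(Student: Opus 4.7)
The plan is to combine three ingredients already available at this point in the paper: (i) $C$ is a compact pro-$p$ subgroup of $\widehat{G}$, recorded just after Lemma~\ref{isomorphic}; (ii) $\Gamma$ is discrete in $\widehat{G}$; and (iii) Theorem~\ref{P1_hatG}, stating that a cocompact lattice in $\widehat{G}$ contains no element of order $p$.

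First I would show that $\Gamma\cap C$ is finite. As a subgroup of the discrete group $\Gamma$, the intersection $\Gamma\cap C$ is itself discrete. On the other hand, it is the intersection of the closed set $\Gamma$ with the compact set $C$, hence compact. A discrete compact subset of a Hausdorff space is finite, so $\Gamma\cap C$ is finite.

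Next I would use that $C$ is pro-$p$ to conclude that every element of $\Gamma\cap C$ has $p$-power order. Indeed, if $\gamma\in\Gamma\cap C$ has finite order $n$, then the closure $\overline{\langle\gamma\rangle}=\langle\gamma\rangle$ is a finite subgroup of the pro-$p$ group $C$, and any finite subgroup of a pro-$p$ group has $p$-power order (it is the inverse limit of its images in the finite continuous quotients of $C$, each of which is a $p$-group).

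Finally, suppose for contradiction that $\Gamma\cap C\neq\{1_{\widehat{G}}\}$, and pick $\gamma\in\Gamma\cap C$ of order $p^{k}$ with $k\geq 1$. Then $\gamma^{p^{k-1}}\in\Gamma\cap C$ is an element of $\Gamma$ of order exactly $p$, contradicting Theorem~\ref{P1_hatG}. Hence $\Gamma\cap C=\{1_{\widehat{G}}\}$. No step in this outline looks delicate; the only ingredient requiring any thought is the finiteness of $\Gamma\cap C$, and that is immediate from discrete-meets-compact.
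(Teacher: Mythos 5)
Your proposal is correct and follows essentially the same route as the paper: finiteness of $\G\cap C$ from discreteness of $\G$ plus compactness of $C$, then $p$-power order of its elements because $C$ is a pro-$p$ group (the paper phrases this via $C$ being an intersection of Sylow pro-$p$-subgroups), and finally a contradiction with Theorem~\ref{P1_hatG}. Your extra step of passing to $\gamma^{p^{k-1}}$ to produce an element of order exactly $p$ just makes explicit what the paper leaves implicit.
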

\begin{proof}
  First note that $\G \cap C$ is finite since $\G$ is discrete and
  $C$ is compact.
  Since $C$ is the intersection of Sylow pro-$p$-subgroups of $\widehat{G}$,
  every finite order element in $\G \cap C$ must have order $p^k$.
  But $\widehat{G}$ is $p$-well-behaved, in particular,
  cocompact lattices do not contain elements of order $p$.
  It follows that $\G \cap C=\{ 1_{\widehat{G}}\}$.
\end{proof}
\begin{cor}
  \label{cor33}
  If $\G\leq \widehat{G}$ is a cocompact lattice, then
  $\pi:\G \rightarrow \pi (\G)$ is an isomorphism. In particular, the cocompact lattice
  $\pi(\G)\leq \overline{G}$ contains no elements of order $p$.  
  \end{cor}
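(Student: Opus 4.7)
The plan is to combine the three preceding results in a short chain. First, I would invoke Proposition \ref{hat(G) to bar(G)} with $\theta = \pi \colon \widehat{G}\to\overline{G}$. The hypotheses are verified by work earlier in the paper: $\pi$ is continuous, surjective (so in particular its image is closed and cocompact), and its kernel is $C$, which by the discussion immediately after Lemma \ref{isomorphic} is a closed compact pro-$p$ subgroup of $\widehat{G}$. Thus $\pi(\G)$ is a cocompact lattice in $\overline{G}$.

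Next, I would argue that $\pi|_\G \colon \G \to \pi(\G)$ is an isomorphism of abstract groups. Surjectivity is automatic, so it suffices to identify the kernel. But the kernel is precisely $\G \cap \ker(\pi) = \G \cap C$, which is trivial by Lemma \ref{trivial intersection}. Hence $\pi|_\G$ is a bijective group homomorphism.

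Finally, to conclude that $\pi(\G)$ contains no elements of order $p$, I would transport this property across the isomorphism $\pi|_\G$: if $\pi(\G)$ contained an element of order $p$, then $\G$ would too, contradicting Theorem \ref{P1_hatG}.

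There is really no substantive obstacle here; the statement is a packaging corollary. The only small point that merits care is making sure that the hypotheses of Proposition \ref{hat(G) to bar(G)} genuinely apply to $\pi$ in the exact form stated (compact kernel, closed cocompact image), which is why I would invoke the compactness of $C$ from the paragraph following Lemma \ref{isomorphic} explicitly rather than implicitly.
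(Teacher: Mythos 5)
Your proposal is correct and coincides with the argument the paper intends (the corollary is stated without an explicit proof, but it is exactly the chain you give): Proposition~\ref{hat(G) to bar(G)} applied to $\pi$ with compact kernel $C$ yields that $\pi(\G)$ is a cocompact lattice, Lemma~\ref{trivial intersection} shows $\ker(\pi|_\G)=\G\cap C=\{1\}$ so $\pi|_\G$ is an isomorphism, and Theorem~\ref{P1_hatG} transports the absence of order-$p$ elements to $\pi(\G)$. No gaps; your care in checking the hypotheses of Proposition~\ref{hat(G) to bar(G)} via the compactness of $C$ is exactly what the paper relies on.
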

All the cocompact lattices constructed by Capdeboscq
and Thomas in $\overline{G}$ are subgroups of $G$ \cite[Th. 1.1, Th. 1.2]{CaTh}.
In fact, their main result \cite[Th. 1.3]{CaTh}
can be interpreted as a classification of
edge-transitive (see Section~\ref{five}) cocompact lattices
with no elements of order $p$ in $\overline{G}$.
All of them are conjugate to subgroups of $G$ and can be lifted
using the next proposition:
\begin{prop} \label{bar(G) to hat(G)}
  Let $\G \leq G$ be a cocompact lattice in $\overline{G}$.
  Then $\G$ is also a cocompact lattice in $\widehat{G}$.
\end{prop}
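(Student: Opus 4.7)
The plan is to leverage the surjective continuous homomorphism $\pi \colon \widehat{G} \twoheadrightarrow \overline{G}$ with compact kernel $C$ which, by Lemma~\ref{isomorphic} and the discussion immediately following it, is a closed map. A key preliminary observation is that the composition $G \hookrightarrow \widehat{G} \xrightarrow{\pi} \overline{G}$ coincides with the natural embedding $G \hookrightarrow \overline{G}$, so that both maps out of $G$ are injective and, in particular, $G \cap C = \{1\}$ in $\widehat{G}$. Since $\G \leq G$, this already forces $\G \cap C = \{1\}$ for free.

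To verify discreteness of $\G$ in $\widehat{G}$, I would pick an open neighbourhood $V$ of $1$ in $\overline{G}$ with $V \cap \G = \{1\}$, using discreteness of $\G \leq \overline{G}$. Then $\pi^{-1}(V)$ is an open neighbourhood of $1$ in $\widehat{G}$, and any $\gamma \in \G \cap \pi^{-1}(V)$ satisfies $\pi(\gamma) \in V \cap \G = \{1\}$, which forces $\gamma \in \G \cap C = \{1\}$.

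The cocompactness half hinges on showing that $\pi$ is proper, i.e.\ that $\pi^{-1}(K_0)$ is compact for every compact $K_0 \subseteq \overline{G}$. My approach is to use local compactness of $\widehat{G}$ to pick a compact neighbourhood $W$ of $C$; closedness of $\pi$ together with $\pi^{-1}(1_{\overline{G}}) = C \subseteq \mathrm{int}(W)$ yields an open neighbourhood $U$ of $1 \in \overline{G}$ with $\pi^{-1}(U) \subseteq W$, namely $U = \overline{G} \setminus \pi(\widehat{G} \setminus W)$. A net-translation argument then shows $\pi^{-1}(K_0)$ is compact: any net therein can, after passing to a subnet with images converging to some $k \in K_0$ and left-multiplying by an element of $\pi^{-1}(k)$, be forced eventually into $W$, producing a further convergent subnet. (Equivalently one invokes the Bourbaki characterisation of proper maps as continuous closed maps with compact fibres.) With properness in hand, cocompactness of $\G$ in $\overline{G}$ supplies (by the construction in the proof of Lemma~\ref{CoLa}) a compact $K_0 \subseteq \overline{G}$ with $\overline{G} = \G K_0$; then $\widehat{G} = \G \cdot \pi^{-1}(K_0)$, since for $g \in \widehat{G}$ writing $\pi(g) = \gamma k$ with $\gamma \in \G$, $k \in K_0$ gives $\gamma^{-1} g \in \pi^{-1}(K_0)$. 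Compactness of $\pi^{-1}(K_0)$ then implies compactness of $\G \setminus \widehat{G}$.

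I expect the main obstacle to be properness of $\pi$. While it follows morally from closedness plus compact fibres, one must handle the topological-group details with some care, whether via the net argument sketched above or via a general proper-map result from Bourbaki. Beyond that the proof is largely bookkeeping, in particular tracking $\G$ simultaneously through the inclusions $G \hookrightarrow \widehat{G}$ and $G \hookrightarrow \overline{G}$ so as to identify its image consistently on either side of $\pi$.
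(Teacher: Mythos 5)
Your proof is correct, and it reaches the conclusion by a somewhat different route than the paper, though both arguments pivot on the same structural facts: $\pi\colon\widehat{G}\to\overline{G}$ is continuous and closed with compact kernel $C$, and $G\cap C=\{1\}$ because $\pi$ restricted to $G$ is the natural embedding. For discreteness the paper runs a sequence argument (a convergent sequence in $\G$ is pushed to $\overline{G}$, is eventually constant there, hence eventually lies in a coset $aC$, and $|aC\cap G|\leq 1$ forces a contradiction), whereas you simply pull back an isolating neighbourhood: $\G\cap\pi^{-1}(V)\subseteq \G\cap C=\{1\}$, which is cleaner and avoids any appeal to first countability. For cocompactness the paper argues directly that $\G\setminus\widehat{G}$ is compact, fibring it over the compact space $\G\setminus\overline{G}\cong\G\setminus\widehat{G}/C$ with compact fibre $C$ via the net argument of~\eqref{argument}; you instead prove the stronger intermediate statement that $\pi$ is proper and then pull back a compact set $K_0$ with $\overline{G}=\G K_0$ (obtained as in the proof of Lemma~\ref{CoLa}) to get $\widehat{G}=\G\,\pi^{-1}(K_0)$. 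Your route buys a reusable fact (properness of $\pi$, equivalently of the quotient map $\widehat{G}\to\widehat{G}/C$ of Lemma~\ref{isomorphic}, which is standard for quotients by compact subgroups), at the cost of the extra topological care you flag; the paper's route stays entirely inside the ad hoc fibre-product argument it has already set up in Proposition~\ref{hat(G) to bar(G)}. One small repair in your properness step: take the closed set $\widehat{G}\setminus\mathrm{int}(W)$ rather than $\widehat{G}\setminus W$ when applying closedness of $\pi$, so that $U=\overline{G}\setminus\pi\bigl(\widehat{G}\setminus\mathrm{int}(W)\bigr)$ is genuinely open; with that adjustment (or by quoting the characterisation of proper maps as closed maps with compact fibres) your argument goes through.
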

\begin{proof}
  Suppose $\G \leq \widehat{G}$ is not discrete.
  Then there exists a sequence $x_n \in \G$, convergent to $a\in\G$, with $x_n\neq a$ for all $n$.
  Since $\pi (a) = \lim \pi (x_n)$ and $\G$ is discrete in $\overline{G}$,
the sequence $\pi (x_n)$ is eventually constant:
there exists $N$, such that $\pi(x_n)=\pi (a)$ (equivalently, $x_n \in aC$) for all $n\geq N$.
Hence, $x_n \in aC\cap G$ for all $n\geq N$ since $\G\subseteq G$.
  
Now observe that the set $aC\cap G$ has at most one element.
Consider two of its elements $ag$ and $ah$ with $g,h\in C$.
Then $(ag)^{-1}ah = g^{-1}h \in C\cap G$ that is equal to $\{1\}$
because $\G$ is a subgroup of $\overline{G}$ as well.
Inevitably $g=h$ and $|aC\cap G| \leq 1$.
It follows that $x_n =x_N$ for all $n\geq N$, a contradiction with all $x_n\neq a$. This shows that $\G$ is discrete in $\widehat{G}$.

The space $\G\setminus\widehat{G}$ is compact because both $C$ and $\G\setminus\overline{G}\cong \G\setminus\widehat{G}/C$ are compact.
The proof is identical to the argument for~\eqref{argument} with  the key map defined by 
$$
\eta: \G\setminus\widehat{G} \times_{C} C \longrightarrow \G\setminus\widehat{G} \, , \ \
(\G a,c) \mapsto \G ac \, .
$$
\end{proof}

\section{Covolumes} \label{four}
Following Bass and Lubotzky \cite{BaLu}, 
let us discuss how the processes of pushing and pulling
cocompact lattices affect their covolumes. 
Recall that for a locally compact group $H$ acting on a set $X$
with compact open stabilisers $H_\vx$, $\vx \in X$ there is a natural \emph{covolume} of a discrete subgroup $\G\leq H$ defined by
$$\voll (\G \setminus \setminus X) \coloneqq \sum_{[\vx] \in \G \setminus X} \frac{1}{|\G_\vx|},$$
where $\G_\vx=H_\vx \cap \G$.
Moreover,
$\voll (\G \setminus \setminus X)< \infty$ if and only if
$\Gamma$ is a lattice (forcing $H$ to be unimodular) and
$$
\mu (H\setminus\setminus X) \coloneqq \sum_{[\vx] \in H \setminus X} \frac{1}{\mu(H_\vx)} < \infty,$$
where $\mu$ is
a right-invariant Haar measure on $H$. 
In this case, we can choose 
$\mu$ on $H$ in such a way that (see \cite[1.5]{BaLu})  
$$\voll (\G \setminus \setminus X) = \mu_{\G\setminus H} (\G \setminus H).$$


We wish to compare covolumes of cocompact lattices in $\widehat{G}$ and $\overline{G}$.
As described in Section~\ref{one}, the Tits building $\sX$ of a rank 2 Kac-Moody group is a tree, whose 
set of vertices $\sX_0$ consists of conjugates of the parabolic subgroups $P_1$ and $P_2$. Let $\vx_i$ denote the vertex corresponding to $P_i$ and $[\vx_i]$ its equivalence class under the action of $G$, $i=1,2$. Then
$$ G \setminus \sX_0 = [\vx_1] \sqcup [\vx_2].$$ 
Both $\overline{G}$ and $\widehat{G}$ act on $\sX$ (cf. the discussion after \cite[Cor. 1.4]{CaRu}). Abusing notation we also write $[\vx_i]$ for the $\overline{G}$ and $\widehat{G}$ equivalence classes of $\vx_i$.


\begin{prop}
\label{Haar_comparison}  
It is possible to normalise the Haar measures
$\widehat{\mu}$ on $\widehat{G}$ and $\overline{\mu}$ on $\overline{G}$
in such a way that
  $$
  \widehat{\mu} ({\G\setminus\widehat{G}}) =
    \overline{\mu} ({\G\setminus\overline{G}}) = 
\sum_{[\vx] \in \G \setminus \sX_0} \frac{1}{|\G_\vx|}
$$
for any cocompact lattice $\Gamma \leq \widehat{G}$,
where, by abuse of notation, 
$\widehat{\mu}$ and $\overline{\mu}$ are also the induced measures on $\Gamma\setminus\widehat{G}$ and $\Gamma\setminus\overline{G}$
correspondingly.
\end{prop}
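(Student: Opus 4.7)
The strategy is to apply the Bass--Lubotzky covolume formula \cite[1.5]{BaLu} to the actions of $\widehat{G}$ and $\overline{G}$ on the set $\sX_0$ of vertices of the Tits building, and to verify that both recover the combinatorial sum once the Haar measures are normalised compatibly. The crucial reconciliation is that the quotient map $\pi$ identifies the $\G$-action on $\sX_0$ via $\widehat{G}$ with the $\pi(\G)$-action via $\overline{G}$, preserving both orbits and stabiliser sizes. To parse the mild abuse of notation in the statement, I identify $\G$ with its image $\pi(\G)\leq\overline{G}$ via Corollary~\ref{cor33}.

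First I normalise the measures. For each $H\in\{\widehat{G},\overline{G}\}$ the action on $\sX_0$ has exactly two orbits $[\vx_1]$ and $[\vx_2]$, with compact open vertex stabilisers equal to the (completed) maximal parabolics $P_1, P_2$. The $(B,N)$-pair structure forces the Borel in $H$ to have index $q+1$ in each $H_{\vx_i}$, so $\mu(H_{\vx_1}) = \mu(H_{\vx_2})$ for any Haar measure on $H$. I therefore normalise $\widehat{\mu}$ and $\overline{\mu}$ so that $\widehat{\mu}(\widehat{G}_{\vx_i}) = 1 = \overline{\mu}(\overline{G}_{\vx_i})$ for $i=1,2$; the unimodularity of $\widehat{G}$ and $\overline{G}$ is automatic from the existence of a cocompact lattice.

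With this normalisation the Bass--Lubotzky formula, applied orbit-by-orbit for any cocompact lattice $\Lambda\leq H$, collapses to
$$\mu(\Lambda\setminus H) \;=\; \sum_{i=1,2}\mu(H_{\vx_i})\sum_{[\vy]\in\Lambda\setminus[\vx_i]}\frac{1}{|\Lambda_\vy|} \;=\; \sum_{[\vy]\in\Lambda\setminus\sX_0}\frac{1}{|\Lambda_\vy|}.$$
Taking $H=\widehat{G}$, $\Lambda=\G$ yields the left-hand equality directly. Taking $H=\overline{G}$, $\Lambda=\pi(\G)$ (cocompact by Proposition~\ref{hat(G) to bar(G)}) gives the analogous expression. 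Since $C$ acts trivially on $\sX$, as observed in the proof of Theorem~\ref{P1_hatG}, and $\G\cap C = \{1\}$ by Lemma~\ref{trivial intersection}, the map $\pi$ induces a bijection $\G\setminus\sX_0 \to \pi(\G)\setminus\sX_0$ and restricts to a bijection $\G_\vy\to\pi(\G)_\vy$ at each vertex. The two sums therefore coincide, completing the second equality.

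The one nontrivial point is the existence of a Haar normalisation making both vertex stabilisers simultaneously have measure $1$, since the two orbits are a priori independent. This is precisely resolved by the edge-transitivity built into the $(B,N)$-pair, which forces both maximal parabolics to contain the Borel with the same index $q+1$ and hence fixes equality of their Haar measures. Without this symmetry (e.g., in higher rank or with non-isomorphic types of parabolics) one would have to carry separate constants for each orbit.
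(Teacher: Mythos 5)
Your overall strategy---apply the Bass--Lubotzky covolume formula to the actions of $\widehat{G}$ and $\overline{G}$ on $\sX_0$ and transfer the resulting sum through $\pi$---is the same as the paper's, and the transfer step (unimodularity, $\G\cong\pi(\G)$, triviality of the $C$-action giving matching orbits and stabilisers) is fine. The gap is in your displayed ``orbit-by-orbit'' formula and the normalisation you base on it. For a non-transitive action the correct statement is that \emph{each orbit separately} already computes the full covolume: for each $i\in\{1,2\}$ one has $\mu(\Lambda\setminus H)=\mu(H_{\vx_i})\sum_{[\vy]\in\Lambda\setminus[\vx_i]}1/|\Lambda_{\vy}|$. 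Summing the two orbit contributions therefore does not give $\mu(\Lambda\setminus H)$; under your normalisation $\mu(H_{\vx_1})=\mu(H_{\vx_2})=1$ it gives $2\,\mu(\Lambda\setminus H)$, so the second equality of the proposition fails by a factor of $2$. A sanity check: for a torsion-free cocompact lattice in $\SL_2(\F_q((t)))$ the quotient of the $(q+1)$-regular tree is a bipartite graph with, say, $n$ vertices of each type; with vertex stabilisers of measure $1$ the covolume is $n$, while the vertex sum $\sum 1/|\G_\vx|$ is $2n$.

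The correct normalisation, and the one the paper takes from \cite[1.5]{BaLu}, is to rescale so that $\mu(H\setminus\setminus\sX_0)=\sum_{[\vx]\in H\setminus\sX_0}1/\mu(H_\vx)=1$; equivalently, since each vertex stabiliser contains $\widehat{B}$ (respectively $\overline{B}$) with index $q+1$, so that the two stabiliser measures are indeed equal, give each vertex stabiliser measure $2$ rather than $1$. With this normalisation the identity $\mu(\G\setminus H)=\sum_{[\vx]\in\G\setminus\sX_0}1/|\G_\vx|$ holds for every cocompact lattice simultaneously, and the rest of your argument goes through verbatim. Note also that your closing remark is misplaced: the Bass--Lubotzky normalisation does not require the two vertex stabilisers to have equal measure (it accommodates a separate constant for each orbit automatically), so nothing about edge-transitivity or the rank-$2$ symmetry is actually needed at this point, and the same argument would work in higher rank.
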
  

\begin{proof}
  If there exist no cocompact lattices, the statement is 
  tautologically true. 


Let $\G \leq \widehat{G}$ be a cocompact lattice. Then $\pi(\G) \cong \G$ is a cocompact lattice in $\overline{G}$.
The group $\overline{G}$ acts on $\sX$ and
the stabilisers $\overline{G}_{\vx}$ for all $\vx \in \sX_0$
are compact open subgroups. In particular,
$\overline{\mu}(\overline{G}_\vx) < \infty$. Thus,
$$\mu (\overline{G}\setminus\setminus \sX) = \sum_{[\vx] \in \overline{G} \setminus \sX_0} \frac{1}{\overline{\mu}(\overline{G}_\vx)} = \frac{1}{\overline{\mu}(\overline{G}_{\vx_1})} + \frac{1}{\overline{\mu}(\overline{G}_{\vx_2})} < \infty,$$
where $\vx_1$ and $\vx_2$ are representatives of the orbits of $P_1$ and $P_2$ under the $\overline{G}$-action respectively.
It follows that $\overline{\mu}$ can be normalised so that 
$$
\overline{\mu} ({\G\setminus\overline{G}}) = 
\sum_{[\vx] \in \pi(\G) \setminus \sX_0} \frac{1}{|\pi(\G)_\vx|}=\sum_{[\vx] \in \G \setminus \sX_0} \frac{1}{|\G_\vx|}.
$$
Now consider the orbits of $\vx_1$ and $\vx_2$ under the action of $\widehat{G}$. Again, we have compact open stabilisers $\widehat{G}_\vx$, for every $\vx \in \sX_0$. By the same argument as above
$$\mu (\widehat{G}\setminus\setminus \sX) = \sum_{[\vx] \in \widehat{G} \setminus \sX_0} \frac{1}{\widehat{\mu}(\widehat{G}_\vx)} < \infty.$$
Consequently,
$$\widehat{\mu} ({\G\setminus\widehat{G}}) = 
\sum_{[\vx] \in \G \setminus \sX_0} \frac{1}{|\G_\vx|}=\overline{\mu} ({\G\setminus\overline{G}}) 
$$
as required.
\end{proof}

\section{Cocompact lattices in $\widehat{G}$, symmetric case}
\label{five}
In this section we assume that $A$ is symmetric ($a_{12}=a_{21}$).
There is a unique (up to an isomorphism) simply connected root datum $\sD_{sc}$ of type $A$:
this is a root datum such that $\Pi^\vee$ forms a basis of $\sY$.
Let $G$ be the Kac-Moody group with the simply connected root datum.
It is possible, yet requiring extra work beyond the scope
of the present paper, 
to extend our results
to an arbitrary root datum $\sD$. 

A lattice $\G$ is called {\em edge-transitive}, if it acts transitively on the set $\sX_1$ of edges of the Tits building, described in Section~\ref{one}. 
Capdeboscq and Thomas classify edge-transitive
$p$-well-behaved cocompact lattices in
$\overline{G}$ \cite{CaTh}.
Now Corollary~\ref{cor33} and \cite[Th. 1.3]{CaTh}
together give us a classification
of edge-transitive
cocompact lattices in $\widehat{G}$.

Capdeboscq and Thomas also determine the $p$-well-behaved cocompact lattice
of the minimal covolume in $\overline{G}$
\cite[Th. 1.4]{CaTh}. This and the observations above yield 
Main Theorem~\ref{main_th},
a similar result for all
  cocompact lattices in $\widehat{G}$.

  Let us prove Main Theorem~\ref{main_th} now.
  Statement (1) follows from  
  Corollary~\ref{cor33} 
  and \cite[Th. 1.1]{CaTh}.
Statement (2) follows from
Proposition~\ref{Haar_comparison} and \cite[Th. 1.3]{CaTh}.
Q.E.D.

\section{Cocompact lattices in $\widehat{G}$, other cases}
\label{six}

\subsection{Not symmetric}
Let us drop the assumption that $a_{21}=a_{12}$
while still assuming that $\max (a_{21},a_{12}) \leq -2$.
These are our assumptions from Section~\ref{one} through Section~\ref{four}.
Thus, all of our results in these sections hold.

Looking at Section~\ref{five},
statement~(1) of Main Theorem~\ref{main_th}
holds but validity of statement~(2) is unclear
at this time.
The reason is that 
Capdeboscq and Thomas \cite{CaTh} do most of their analysis
only in the symmetric case.
While the first two statements of 
\cite[Th. 1.1]{CaTh}
hold without the symmetricity assumption,
yet \cite[Th. 1.3]{CaTh}
requires it. With this in light
we find the following question interesting.
\begin{que}
  Determine a cocompact lattice of minimal covolume in $\widehat{G}$
  and compute its covolume.
\end{que}

\subsection{The case of $a_{12}=-1$}
We still assume that
$\max (a_{21},a_{12}) \leq -2$.
The first issue with this case is that our definition
of Kac-Moody group does not work in this case.
If $m=a_{21}$, the reflection representation of the Weyl group
 in this case is given by
$$ \rho (w_1)= 
\begin{pmatrix}
-1 & m \\
0 & 1 \\
\end{pmatrix}\, , \ \ 
\rho (w_2) =
\begin{pmatrix}
 1 &  0 \\
 1 & -1
\end{pmatrix} \, .$$
Hence, we have a real root that is a sum of two real roots:
$$
w_1 (\alpha_2) =
\begin{pmatrix}
 m \\
 1
\end{pmatrix}
=
\begin{pmatrix}
 1 \\
 0
\end{pmatrix}
+
\begin{pmatrix}
 m-1 \\
 1
\end{pmatrix}
=
\alpha_1 + w_1w_2(\alpha_1).
$$
Consequently, there exist non-commuting subgroups $U_\alpha$
and $U_\beta$ for prenilpotent pairs $\{\alpha, \beta \}$.
Nevertheless, 
the groups $\widehat{G}$ and
$\overline{G}$ still exist in this case \cite{CaRu}
and 
many of our general results about them, e.g.,
Lemmas~\ref{isomorphic} and \ref{CoLa}, 
Propositions~\ref{limit point}, 
\ref{hat(G) to bar(G)}, 
\ref{bar(G) to hat(G)} and  
\ref{Haar_comparison}
are still applicable.
The group $\widehat{G}$ is still first countable,
although Lemma~\ref{first_count}
requires a new, subtler proof.

The current proof does not work because 
Lemma~\ref{free pr decomp}, a key result for all consequent analysis,
fails.  
Hence, we do not know whether many major results, e.g., 
Theorems~\ref{P2_hatG} and \ref{P1_hatG}, 
Lemma~\ref{trivial intersection}, 
Corollary~\ref{cor33}
remain valid.
We state some of them in a series of conjectures and questions.
  

\begin{con}  The Kac-Moody group $\widehat{G}$ admits a cocompact lattice.
\end{con}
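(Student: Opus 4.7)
The natural plan is to follow the strategy of the symmetric case: first construct a cocompact lattice $\G \leq G$ that is discrete and cocompact in $\overline{G}$, and then transfer it to $\widehat{G}$ via Proposition~\ref{bar(G) to hat(G)}, which the excerpt observes still holds when $a_{12} = -1$. This reduces the conjecture to the corresponding existence statement in $\overline{G}$. For the latter, I would work on the Tits tree $\sX$, which remains a $(q+1)$-regular tree on which $\overline{G}$ acts with compact open vertex and edge stabilizers $\overline{P_1}$, $\overline{P_2}$, $\overline{B}$.

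By Bass-Serre theory, a natural candidate is the amalgamated product $\G = \G_1 *_{B_0} \G_2$, where each $\G_i$ is a finite subgroup of $P_i$ acting transitively on the $q+1$ edges incident to the standard type-$i$ vertex, and $B_0 = \G_1 \cap \G_2 \leq B$. A concrete choice is the rank-one Chevalley subgroup $\langle U_{\alpha_i}, U_{-\alpha_i}, H, n_i(1)\rangle$ (a finite extension of $\SL_2(\F_q)$), with $\G_1$ and $\G_2$ amalgamated along their shared torus-plus-unipotent structure in $B$. Cocompactness of $\G$ in $\overline{G}$ then follows from standard Bass-Serre arguments: edge-transitivity of $\G$ on $\sX$ implies that $\G\backslash\overline{G}$ is covered by finitely many $\overline{B}$-cosets, hence is compact.

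The main obstacle is verifying discreteness of $\G$ in $\overline{G}$. In the case $|a_{ij}| \geq 2$, Capdeboscq-Thomas \cite{CaTh} rely on the free product decomposition $U = U_1 * U_2$ of Lemma~\ref{free pr decomp}, which fails precisely when $a_{12} = -1$. One avenue is to argue directly: the pointwise stabilizer in $\overline{G}$ of a sufficiently large ball in $\sX$ is an arbitrarily small open neighbourhood of the identity, so it suffices to check that no nontrivial element of the finite amalgam $\G$ fixes such a ball pointwise --- a tractable verification once the ball radius is bounded in terms of the word-length on $\G$. An alternative is to exhibit $\G$ as the image of a discrete subgroup of $\aut(\sX)$ under a continuous lift, but this requires care since $\overline{G} \to \aut(\sX)$ may have nontrivial kernel (the centre of $G$). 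Once discreteness is established, the remaining hypotheses of Proposition~\ref{bar(G) to hat(G)} are automatic: $\G \leq G$ by construction, and the identity $C \cap G = \{1\}$ underlying that proposition remains valid because $C$ sits inside $\widehat{U}$ and is disjoint from the (central) kernel of $G \to \widehat{G}$.
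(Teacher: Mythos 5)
You are attempting to prove a statement that the paper itself leaves as an open conjecture (the case $a_{12}=-1$ in Section~\ref{six}); there is no proof in the paper to compare against, and the paper explains why: the structural input used for $|a_{ij}|\geq 2$ breaks down here. Your reduction step is sound and matches the route the paper hints at --- Proposition~\ref{bar(G) to hat(G)} does remain valid in this case, so it suffices to produce a cocompact lattice $\G\leq G$ that is discrete in $\overline{G}$ (your closing remark about $C\cap G$ is unnecessary and slightly garbled: the map $G\to\widehat{G}$ is injective, and $C\cap G=\{1\}$ simply because $G$ injects into $\overline{G}\cong\widehat{G}/C$). The genuine gap is that the existence of such a lattice in $\overline{G}$ is exactly the open problem, and your construction does not settle it.

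Concretely, the proposed vertex groups $\G_i=\langle U_{\alpha_i},U_{-\alpha_i},H,n_i(1)\rangle$ cannot work: these rank-one Levi subgroups together contain all $U_{\pm\alpha_1},U_{\pm\alpha_2}$, $H$ and the elements $n_i(1)$, whose conjugation action produces every root subgroup $U_\alpha$, so $\langle\G_1,\G_2\rangle=G$, which is dense in $\overline{G}$ rather than discrete; in particular the subgroup they generate is not the amalgam $\G_1*_{B_0}\G_2$ and is not a lattice. (Heuristically, vertex groups containing root subgroups are the wrong choice anyway: cocompact lattices are expected to be $p$-torsion free, as proved for $\widehat{G}$ when $|a_{ij}|\geq 2$, and Capdeboscq--Thomas accordingly use $p'$-subgroups such as normalizers of non-split tori \cite{CaTh}.) More fundamentally, the discreteness step --- showing that suitably chosen finite subgroups of $P_1,P_2$ generate a subgroup of $G$ that genuinely realises the amalgam and acts on $\sX$ with finite stabilizers --- is the entire content of the Capdeboscq--Thomas existence theorem, and their verification leans on the structure of $U$, in particular the free product decomposition $U=U_1\ast U_2$ of Lemma~\ref{free pr decomp} and the triviality of the commutator relations; both fail when $a_{12}=-1$, since $w_1(\alpha_2)=\alpha_1+w_1w_2(\alpha_1)$ forces non-commuting prenilpotent pairs. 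Your suggested remedy (check that no nontrivial element of the abstract amalgam fixes a large ball pointwise) is a restatement of the problem, not an argument: without injectivity of the amalgam into $G$ and control of its local action there is no bound relating word length in the amalgam to displacement on $\sX$. So the proposal records the correct reduction to $\overline{G}$ but leaves the conjecture open.
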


\begin{con}  A cocompact lattice in $\widehat{G}$ does not contain an element of order $p$. 
\end{con}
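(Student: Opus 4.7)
The plan is to adapt the proof of Theorem~\ref{P1_hatG}. Let $\G\leq\widehat{G}$ be a cocompact lattice and $u\in\G$ an element of order $p$. Since the cyclic group generated by $u$ is pro-$p$, $u$ lies in a Sylow pro-$p$ subgroup of $\widehat{G}$, which in the present setting is still a conjugate of $\widehat{U}$; thus we may assume $u\in\widehat{U}$. As noted in Section~\ref{six}, Proposition~\ref{limit point} and Lemma~\ref{CoLa} remain valid, so the closed conjugacy class $u^{\widehat{G}}$ contains a limit point $x\in C$ of a sequence $(g^n u g^{-n})$. Consequently $u$ is conjugate to $x\in C$, and therefore acts trivially on the Tits tree.

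To finish the argument, the plan is to establish the analog of Theorem~\ref{P2_hatG}: every element of order $p$ in $\widehat{G}$ is contained in a conjugate of $\widehat{U_1}$ or $\widehat{U_2}$. Granted this, together with the auxiliary (still-to-be-checked) fact that non-identity elements of $\widehat{U_i}$ act non-trivially on the Tits tree, the trivial action of $u$ would force $u=1$, contradicting $|u|=p$. The whole conjecture therefore reduces to extending Theorem~\ref{P2_hatG} to the $a_{12}=-1$ setting.

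The main obstacle is the failure of Lemma~\ref{free pr decomp}: the non-trivial commutator relations produced by the prenilpotent pair $\{\alpha_1, w_1w_2(\alpha_1)\}$ with sum $w_1(\alpha_2)$ prevent $U$ from being a free product $U_1\ast U_2$, so the Herfort--Ribes theorem on torsion in free pro-$p$ products is no longer directly available. My plan is to replace it by a pro-$p$ Bass--Serre statement in the spirit of Ribes--Zalesskii: set $U_0\coloneqq U_1\cap U_2$, analyse the decomposition of $\widehat{U}$ as an amalgamated pro-$p$ product $\widehat{U_1}\amalg_{\widehat{U_0}}\widehat{U_2}$, and show via an action on the associated pro-$p$ tree that every torsion element is conjugate into $\widehat{U_1}$, $\widehat{U_2}$, or $\widehat{U_0}$. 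The final step would then be to rule out torsion confined to $\widehat{U_0}$, either by proving $\widehat{U_0}$ torsion-free or by showing that any such torsion already lies in a conjugate of some $\widehat{U_i}$. This last step, which requires the explicit structure of $U_0$ extracted from the root datum with $a_{12}=-1$, I expect to be the principal difficulty.
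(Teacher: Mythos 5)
This statement appears in the paper only as an open conjecture in Section~\ref{six}: the authors give no proof of it, precisely because Lemma~\ref{free pr decomp} fails when $a_{12}=-1$ and they explicitly say they do not know whether Theorems~\ref{P2_hatG} and \ref{P1_hatG} survive in that setting. Your proposal does not close this gap; it is a reduction to the same open problem, and moreover the reduction itself is presented in the wrong order. Proposition~\ref{limit point} applies to elements of $\sU=\cl(U_1\times -U_2)$ (or a conjugate of it), not to arbitrary elements of $\widehat{U}$, so your first paragraph's conclusion that $u$ is conjugate into $C$ is unjustified until the analogue of Theorem~\ref{P2_hatG} is in hand; that analogue is exactly the ingredient you defer to the second paragraph and never establish. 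The ``auxiliary fact'' that non-identity elements of $\widehat{U_i}$ act non-trivially on the tree is likewise left unchecked.

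The strategy you sketch for the missing Theorem~\ref{P2_hatG} analogue also rests on unverified foundations. When $a_{12}=-1$ the groups $U_1,U_2$ are no longer abelian and $U_0=U_1\cap U_2$ is non-trivial; even granting Tits's abstract amalgam $U=U_1*_{U_0}U_2$, the pro-$p$ completion of an abstract amalgam need not be a \emph{proper} amalgamated free pro-$p$ product: the factors and the amalgamated subgroup need not embed into $\widehat{U}$, and the closure of $U_0$ need not be its full pro-$p$ completion. The Ribes--Zalesskii pro-$p$-tree results on torsion that you want to invoke require exactly this properness, so the heart of the matter is establishing the proper decomposition, not the step you single out as the ``principal difficulty'': if the amalgam were proper, the amalgamated subgroup would sit inside $\widehat{U_1}$, so torsion conjugate into $\widehat{U_0}$ would automatically be conjugate into a factor and that last step would be vacuous. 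As it stands, the conjecture remains open and your argument, while a reasonable research plan, is not a proof.
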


\begin{que} Classify cocompact lattices in $\widehat{G}$. \end{que}

\begin{que}
  Determine a cocompact lattice of minimal covolume in $\widehat{G}$
  and compute its covolume.
\end{que}


\subsection{Higher rank}
Let us now consider a generalised Cartan matrix $A$ of a larger size.
The groups $\widehat{G}$ and
$\overline{G}$ exist in this case \cite{CaRu}.
We expect that the group $\widehat{G}$ is still first countable
(cf. Lemma~\ref{first_count}).
Consequently, the general results about them, e.g.,
Lemmas~\ref{isomorphic} and \ref{CoLa}, 
Propositions~\ref{limit point}, 
\ref{hat(G) to bar(G)}, 
\ref{bar(G) to hat(G)}
and 
\ref{Haar_comparison}
are still applicable.

It would be interesting to know when these groups admit cocompact lattices.
If $A$ is affine, not of type $\widetilde{A_n}$,
Borel and Harder prove 
that $\overline{G}$ does not admit cocompact lattices \cite{BH}. 
Now suppose that $A$ has an irreducible principal minor that is
affine, not of type $\widetilde{A_n}$.
Caprace and Monod observe
that 
$\overline{G}$
does not admit  
a cocompact lattice either \cite[Rem. 4.4]{CMo}. 
Proposition~\ref{hat(G) to bar(G)} implies
that 
$\widehat{G}$
does not admit  
a cocompact lattice either.
On the other hand, cocompact lattices exist in
a right-angled Kac-Moody group $\overline{G}$
\cite{CaTh2}. The right-angled Kac-Moody
groups are a subclass of the groups in the following conjecture:

\begin{con} If any irreducible principal minor of $A$ of affine type
is of type $\widetilde{A}_m$,
then
the Kac-Moody group $\widehat{G}$ admits a cocompact lattice.
\end{con}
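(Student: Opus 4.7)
The plan is to exploit Proposition~\ref{bar(G) to hat(G)}: it suffices to produce a cocompact lattice $\G \leq \overline{G}$ that is actually contained in the minimal group $G$, for then $\G$ is automatically cocompact in $\widehat{G}$. The hypothesis on $A$ is exactly what defeats the Borel-Harder \cite{BH} and Caprace-Monod \cite[Rem.~4.4]{CMo} obstructions: every irreducible principal minor of $A$ of affine type is of type $\widetilde{A}_m$, and the corresponding Chevalley group $\SL_{m+1}(\F_q((t)))$ does admit cocompact lattices by \cite{BH}. So the conjecture ultimately reduces to a positive existence statement in $\overline{G}$, which one then lifts.

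Concretely, I would work with the Davis-Tits realisation $\sX$ of the building of $G$, a finite-dimensional CAT(0) polyhedral complex on which $\overline{G}$ acts properly and cocompactly by cellular isometries. The goal is to assemble $\G$ as the fundamental group of a complex of groups supported on a finite quotient of $\sX$, with the local group at each face a carefully chosen subgroup of the corresponding standard parabolic of $G$. Along each spherical face the local group would be a finite subgroup of a finite Chevalley group over $\F_q$; along each face whose type is an affine minor $\widetilde{A}_m$ the local group would be (the restriction of) a cocompact arithmetic lattice in $\SL_{m+1}(\F_q((t)))$, selected inside its $\F_q[t^{-1}]$-points so as to remain inside the minimal group $G$. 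Developability of this complex via Haefliger's criterion, together with finiteness of the vertex stabilisers, would then yield the desired lattice in $\overline{G}$, which by construction lies in $G$ and hence pulls back to $\widehat{G}$ by Proposition~\ref{bar(G) to hat(G)}.

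The main obstacle is coherence of the local data: the finite-type glue groups and the affine arithmetic lattices must intersect compatibly along common Levi factors, and every choice must simultaneously sit inside the $\F_q$-form $G$. In the right-angled case, where these local choices essentially decouple, this is exactly the successful construction of Capdeboscq-Thomas \cite{CaTh2}; in the general situation I expect the remedy is to fix a common integral model of the affine pieces and pass to a sufficiently deep congruence subgroup of each $\SL_{m+1}(\F_q[t^{-1}])$ so that intersections along shared Levi subgroups align. A secondary task is to verify that the discrete subgroup produced is in fact cocompact in $\overline{G}$ (rather than merely acting cocompactly on $\sX$); this should follow from a covolume argument in the spirit of Proposition~\ref{Haar_comparison} once one knows that $\G \setminus \sX_0$ is finite and all stabilisers $\G_\vx$ are finite.
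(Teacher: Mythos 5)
The statement you are trying to prove is stated in the paper as an open \emph{conjecture}; the paper offers no proof of it, and your text is likewise not a proof but a research programme. The reduction step is sound and is exactly the one the paper envisages: by Proposition~\ref{bar(G) to hat(G)} it suffices to produce a cocompact lattice of $\overline{G}$ lying inside the minimal group $G$. But everything after that is conditional. The entire difficulty of the conjecture is concentrated in the construction you defer: assembling a complex of groups over a finite quotient whose local data are mutually compatible along shared Levi factors and simultaneously sit inside the $\F_q$-form $G$, and proving developability with the development equal to the building of $G$ (which also requires the local indices to match the valence $q+1$, not merely Haefliger non-positive curvature). You acknowledge this as ``the main obstacle'' and offer only an expectation that deep congruence subgroups will align the intersections; outside the right-angled case \cite{CaTh2} no such construction is known, and the hypothesis on affine minors only removes the Borel--Harder \cite{BH} and Caprace--Monod \cite[Rem.~4.4]{CMo} obstructions --- absence of an obstruction is not an existence proof.

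There is also a concrete internal flaw in the proposed local data. In the Davis realisation the cells correspond to \emph{spherical} residues only, and their stabilisers in $\overline{G}$ are compact open; hence any discrete subgroup of $\overline{G}$ has \emph{finite} cell stabilisers. So you cannot take as a local group on a face ``a cocompact arithmetic lattice in $\SL_{m+1}(\F_q((t)))$'': faces of affine type $\widetilde{A}_m$ are not cells of the Davis complex, and if you attach an infinite local group to any cell the resulting fundamental group cannot embed as a discrete subgroup acting on the building. The affine $\widetilde{A}_m$ minors can only enter indirectly (for instance through the structure of the intersections of the lattice with affine parabolic subgroups), not as prescribed infinite cell stabilisers. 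This, together with the unproved coherence step, means the argument does not establish the conjecture; at best it records a plausible strategy whose key steps remain open.
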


\begin{que}
  Determine a cocompact lattice of minimal covolume in $\widehat{G}$
  and compute its covolume.
\end{que}


\end{document}